\font\smallit=cmti10
\font\smalltt=cmtt10
\renewcommand\section{\@startsection {section}{1}{\z@}
{-30pt \@plus -1ex \@minus -.2ex}
{2.3ex \@plus.2ex}
{\normalfont\normalsize\bfseries}}
\renewcommand\subsection{\@startsection{subsection}{2}{\z@}
{-3.25ex\@plus -1ex \@minus -.2ex}
{1.5ex \@plus .2ex}
{\normalfont\normalsize\bfseries}}
\renewcommand{\@seccntformat}[1]{\csname the#1\endcsname. }
\newtheorem{lemma}{Lemma}[section]
\newtheorem{prop}[lemma]{Proposition}
\newtheorem{thm}[lemma]{Theorem}
\newtheorem{cor}[lemma]{Corollary}
\theoremstyle{definition}
\theoremstyle{remark}
\newtheorem{remark}[lemma]{Remark}
\newcommand{\e}{\varepsilon}
\newcommand{\card}{\text{card}}
\newcommand{\B}{\mathcal B}
\renewcommand{\epsilon}{\varepsilon}
\begin{document}

\begin{center}
\uppercase{\bf Expansions in non-integer bases: lower order revisited}
\vskip 20pt
{\bf Simon Baker}\\
{\smallit School of Mathematics, University of Manchester,  Manchester, United Kingdom}\\
{\tt simonbaker412@gmail.com}\\
\vskip 10pt
{\bf Nikita Sidorov}\\
{\smallit School of Mathematics, University of Manchester,  Manchester, United Kingdom}\\
{\tt sidorov@manchester.ac.uk}\\
\end{center}
\vskip 30pt

\centerline{\smallit Received: , Revised: , Accepted: , Published: } 
\vskip 30pt

\centerline{\bf Abstract}

\noindent
Let $q\in(1,2)$ and $x\in[0,\frac1{q-1}]$. We say that a sequence $(\e_i)_{i=1}^{\infty}\in\{0,1\}^{\mathbb{N}}$ is an expansion of $x$ in base~$q$ (or a $q$-expansion) if
\[
x=\sum_{i=1}^{\infty}\e_iq^{-i}.
\]
For any $k\in\mathbb N$, let $\B_k$ denote the set of $q$ such that there exists $x$ with exactly $k$ expansions in base $q$. In \cite{Sid1} it was shown that $\min\B_2=q_2\approx 1.71064$, the appropriate root of $x^{4}=2x^{2}+x+1$. In this paper we show that for any $k\geq 3$, $\min\B_k=q_f\approx1.75488$, the appropriate root of $x^3=2x^2-x+1$.

\pagestyle{myheadings}
\markright{\smalltt INTEGERS: 14 (2014)\hfill}
\thispagestyle{empty}
\baselineskip=12.875pt
\vskip 30pt

\section{Introduction}

Let ${q}\in(1,2)$ and $I_{q}=[0,\frac{1}{{q}-1}]$. Given $x\in \mathbb{R}$, we say that a sequence $(\epsilon_{i})_{i=1}^{\infty}\in\{0,1\}^{\mathbb{N}}$ is a \textit{${q}$-expansion} for $x$ if
\begin{equation}
\label{beta equation}
x=\sum_{i=1}^{\infty}\frac{\epsilon_{i}}{{q}^{i}}.
\end{equation} Expansions in non-integer bases were pioneered in the papers of R\'enyi \cite{Renyi} and Parry \cite{Parry}.

It is a simple exercise to show that $x$ has a ${q}$-expansion if and only if $x\in I_{q}$, when $(\ref{beta equation})$ holds we will adopt the notation $x=(\epsilon_{1},\epsilon_{2},\ldots)_{q}$.  Given $x\in I_{q}$ we denote the set of ${q}$-expansions for $x$ by $\Sigma_{q}(x)$, i.e., $$\Sigma_{q}(x)=\Big\{(\epsilon_{i})_{i=1}^{\infty}\in \{0,1\}^{\mathbb{N}} : \sum_{i=1}^{\infty}\frac{\epsilon_{i}}{{q}^{i}}=x\Big\}.$$

In \cite{Erdos} it is shown that for ${q}\in(1,\frac{1+\sqrt{5}}{2})$ the set $\Sigma_{q}(x)$ is uncountable for all $x\in (0,\frac{1}{{q}-1})$; the endpoints of $I_{q}$ trivially have a unique ${q}$-expansion for all ${q}\in(1,2)$. In \cite{SidVer} it is shown that for ${q}=\frac{1+\sqrt{5}}{2}$ every $x\in(0,\frac{1}{{q}-1})$ has uncountably many ${q}$-expansions unless $x=\frac{(1+\sqrt{5})n}{2}\bmod1$, for some $n\in\mathbb{Z}$, in which case $\Sigma_{q}(x)$ is infinite countable. Moreover, in \cite{DaKa} it is shown that for all ${q}\in(\frac{1+\sqrt{5}}{2},2)$ there exists $x\in (0,\frac{1}{{q}-1})$ with a unique ${q}$-expansion. In this paper we will be interested in the set of ${q}\in(1,2)$ for which there exists $x\in I_{q}$ with precisely $k$ ${q}$-expansions. More specifically, we will be interested in the set
$$
\B_{k}:=\Big\{{q}\in(1,2)| \textrm{ there exists } x\in \Big(0,\frac{1}{{q}-1}\Big) \textrm{ satisfying } \#\Sigma_{q}(x)=k\Big\}.
$$
It was shown in \cite{EJ} that $\B_k\neq\varnothing$ for any $k\ge2$. Similarly we can define $\B_{\aleph_{0}}$ and $\B_{2^{\aleph_{0}}}$. The reader should bear in mind the possibility that the number of expansions could lie strictly between countable infinite and the continuum. By the above remarks it is clear that $\B_{1}=(\frac{1+\sqrt{5}}{2},2)$. In \cite{Sid1} the following theorem was shown to hold.

\begin{thm}
\label{Nik's thm}
\begin{itemize}
	\item The smallest element of $\B_{2}$ is
$$
{q}_{2}\approx 1.71064,
$$
the appropriate root of $x^{4}=2x^{2}+x+1$.
	\item The next smallest element of $\B_{2}$ is
$$
{q}_{f}\approx 1.75488,
$$
the appropriate root of $x^{3}=2x^{2}-x+1$.
	\item For each $k\in\mathbb{N}$ there exists $\gamma_{k}>0$ such that $(2-\gamma_{k},2)\subset \B_{j}$ for all $1\leq j\leq k$.
\end{itemize}
\end{thm}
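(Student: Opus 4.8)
The plan is to translate everything into the combinatorics of the \emph{branching} (or switch) dynamics and to count expansions as infinite paths in a binary tree. First I would set $T_0(x)=qx$ and $T_1(x)=qx-1$ and observe that from $x\in I_q$ the digit $\epsilon_1=0$ is admissible iff $x\le\frac1{q(q-1)}$ and $\epsilon_1=1$ is admissible iff $x\ge\frac1q$; thus genuine choice occurs exactly on the switch region $S_q=[\frac1q,\frac1{q(q-1)}]$, which for $q\in(1,2)$ is a nondegenerate interval. Iterating, $\Sigma_q(x)$ is in bijection with the infinite paths of the binary tree whose nodes branch precisely when the current orbit point lies in $S_q$, so $\#\Sigma_q(x)$ counts those paths. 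In particular $x$ has a unique expansion iff no admissible orbit of $x$ ever meets $S_q$, i.e.\ $x$ lies in the univoque set $U_q$, which I would describe through the standard lexicographic (Parry / de Vries--Komornik) conditions phrased via the quasi-greedy expansion $\alpha(q)=(\alpha_i)$ of $1$.

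The key reduction I would then prove is: $\#\Sigma_q(x)=2$ iff the orbit of $x$ is forced (non-branching) until it first reaches some $z\in S_q$ and both children $qz\in U_q$ and $qz-1\in U_q$ --- for if either child had two or more expansions the total would already be at least three. Equivalently, $q\in\B_2$ iff there exist univoque sequences $(a_i),(b_i)$ with $\sum_{i}(a_i-b_i)q^{-i}=1$, the two expansions being $0a_1a_2\cdots$ and $1b_1b_2\cdots$. The whole theorem thus becomes the question of which $q$ admit two univoque points at distance exactly $1$, which I would decide by optimising over the pair $(a_i),(b_i)$ subject to the lexicographic univocity constraints against $\alpha(q)$.

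For the first bullet I would show that as $q$ decreases the univocity constraints become so restrictive that no two univoque points at distance $1$ survive below a threshold, and that this threshold is realised by an explicit extremal pair of eventually periodic univoque sequences whose defining equation is $x^4=2x^2+x+1$; combined with $\B_1=(\frac{1+\sqrt5}2,2)$ this yields $\min\B_2=q_2$ together with an explicit $x$ achieving $\#\Sigma_{q_2}(x)=2$. For the second bullet the organising fact is that $q_f$ is exactly the base whose quasi-greedy expansion of $1$ is $(1100)^\infty$ (indeed the minimal polynomial $x^3-2x^2+x-1$ divides $x^4-x^3-x^2-1$), and this is the first base above $q_2$ at which a fresh admissible bifurcation into two univoque points reappears; I would construct it explicitly at $q_f$ and, on the open interval $(q_2,q_f)$, rule out every candidate pair by the lexicographic conditions, establishing the gap and hence that $q_f$ is the next element of $\B_2$. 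For the third bullet I would argue that as $q\to2$ the switch region shrinks to $\frac12$ and the univoque set becomes abundant, so one can concatenate univoque blocks while inserting a prescribed number $j-1$ of controlled, non-recombining bifurcations to build, for every $1\le j\le k$, a point with exactly $j$ expansions; taking $\gamma_k$ small enough that all these constructions are simultaneously admissible gives $(2-\gamma_k,2)\subset\B_j$ for all $1\le j\le k$.

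The main obstacle I expect lies in the first two bullets: proving rigorously that below $q_2$, and again throughout $(q_2,q_f)$, no two univoque points can sit at distance exactly $1$, and extracting the precise algebraic values $q_2,q_f$ as the boundary cases of the univocity inequalities. This is a delicate lexicographic analysis --- controlling how the tails of the two competing sequences interact with $\alpha(q)$ --- rather than a routine computation, and it is where the extremal sequences (and hence the minimal polynomials) must be pinned down.
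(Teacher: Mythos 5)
The paper itself contains no proof of Theorem~\ref{Nik's thm}: it is imported wholesale from \cite{Sid1}, so the only meaningful comparison is against Sidorov's original argument. Your proposal reconstructs precisely that argument's skeleton: the branching reduction showing $q\in\B_2$ if and only if there exist $y$ and $y+1$ in $I_q$ that are both univoque (equivalently, two univoque values at distance exactly $1$, the two expansions of the branch point being $0a_1a_2\cdots$ and $1b_1b_2\cdots$), a lexicographic analysis to pin down $q_2$ and to exclude the interval $(q_2,q_f)$, and explicit concatenation constructions with controlled, non-recombining bifurcations for the third bullet; this is exactly how \cite{Sid1} proceeds (the present paper even quotes the key step, \cite[Proposition~2.4]{Sid1}, in its Proposition~\ref{two expansions corollary}). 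The one point worth flagging: what you describe as the main obstacle --- an open-ended ``delicate lexicographic analysis'' of how the tails of the two candidate sequences interact with $\alpha(q)$ --- is in fact closed off by the Glendinning--Sidorov characterisation (\cite{GlenSid}, stated here as Lemma~\ref{Unique expansions lemma}): for $q\in(\frac{1+\sqrt5}2,q_f]$ the \emph{only} univoque sequences are $0^k(10)^\infty$, $1^k(10)^\infty$, $0^\infty$, $1^\infty$, so the ``optimisation over pairs'' collapses to the two-parameter family of algebraic equations $(1^n(10)^\infty)_q-(0^m(10)^\infty)_q=1$, whose solutions in this range occur only at $q=q_2$ (giving the first bullet) and at $q=q_f$ (giving the second); with that substitution your outline is essentially Sidorov's proof.
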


The following theorem is the central result of the present paper. It answers a question posed by V.~Komornik \cite{Kom} (see also \cite[Section~5]{Sid1}).

\begin{thm}
\label{Main thm}
For $k\geq 3$ the smallest element of $\B_{k}$ is ${q}_{f}$.
\end{thm}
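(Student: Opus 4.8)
The plan is to reduce Theorem \ref{Main thm} to two statements treated separately: (A) $q_f \in \B_k$ for every $k \geq 3$, and (B) $\B_k \cap (1, q_f) = \varnothing$ for every $k \geq 3$. Throughout I would work with the two inverse branches $T_0(x) = qx$ and $T_1(x) = qx - 1$ and the \emph{switch region} $S_q = [\frac1q, \frac{1}{q(q-1)}]$; a digit is forced to be $0$ when $x < \frac1q$, forced to be $1$ when $x > \frac1{q(q-1)}$, and free precisely when $x \in S_q$. Writing $N(x) = \#\Sigma_q(x)$, this yields the branching recursion
\[
N(x) = \begin{cases} N(qx), & x < \tfrac1q,\\ N(qx-1), & x > \tfrac1{q(q-1)},\\ N(qx) + N(qx-1), & x \in S_q,\end{cases}
\]
so that $N(x) = k$ is equivalent to the expansion tree of $x$ being finite with exactly $k$ infinite branches, every leaf being a point with a unique expansion.

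The arithmetic of $q_f$ that drives everything is the identity $q_f(q_f-1)^2 = 1$, equivalently $(q_f-1)^2 = q_f^{-1}$: this says precisely that the orbit point $T_1(1) = q_f - 1$ coincides with the right endpoint $\frac{1}{q_f(q_f-1)}$ of $S_{q_f}$, and a short computation then gives the greedy expansion of $1$ as $1101\overline0$ and the quasi-greedy expansion as $\alpha(q_f) = (1100)^\infty$. For (A) I would produce, for each $k$, an explicit $x_k$ whose tree is a caterpillar: a forced prefix leading to a first free position, after which one choice enters a univoque tail while the other is driven by a forced block to the next free position, repeated $k-1$ times and closed off by a final free position both of whose continuations are univoque (such a terminal two-leaf configuration exists since $q_f \in \B_2$ by Theorem \ref{Nik's thm}). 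The delicate point is that the obvious free position $q_f - 1$ must be \emph{avoided}: its continuation returns through $1$ to $q_f - 1$, yielding the infinite loop $1 \to q_f - 1 \to \frac{2-q_f}{q_f - 1} \to \frac1{q_f} \to 1$ and hence $\aleph_0$ rather than finitely many expansions. One therefore assembles the caterpillar from free positions strictly inside $S_{q_f}$ whose two images are genuinely univoque, and verifies the leaf count by testing each constructed tail against the lexicographic criterion for univoque sequences relative to $\alpha(q_f) = (1100)^\infty$.

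For (B) the range $q \in (1, \frac{1+\sqrt5}2]$ is immediate: by \cite{Erdos, SidVer} every $x \in (0, \frac1{q-1})$ then has uncountably (or, at the golden ratio, countably) many expansions, so no finite $k \geq 2$ occurs. The substance lies in $q \in (\frac{1+\sqrt5}2, q_f)$, and this is the step I expect to be the main obstacle, since it concerns \emph{all} points at once. Here $q(q-1)^2 < 1$, so $T_1(1) = q-1$ lies in the interior of $S_q$ and the orbit of $1$ re-enters the switch region. I would argue that a finite tree with $k \geq 3$ leaves must contain a path with at least two free positions joined by a forced block, analyse the forced return dynamics, and show that for $q < q_f$ any forced re-entry into $S_q$ after a free position drives the orbit into the trapping configuration carried by $1$ (or by its mirror $\frac{2-q}{q-1}$), which has $\aleph_0$ expansions, contradicting finiteness. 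Equivalently and more quantitatively, one shows that the lexicographic inequalities needed to close off two chained free positions with univoque leaves can be satisfied only when $\alpha(q) \succeq (1100)^\infty$, that is, only when $q \geq q_f$. Combining (A) and (B) gives $\min \B_k = q_f$ for all $k \geq 3$.
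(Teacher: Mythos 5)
Your part (A) is essentially the paper's own argument (the paper exhibits $x_k=(1(0000)^{k-1}0(10)^{\infty})_{q_f}$ and proves $\#\Sigma_{q_f}(x_k)=k$ by induction on digit forcing), and the golden-ratio range in (B) is indeed immediate. The genuine gap is the case $q\in(\frac{1+\sqrt5}{2},q_f)$ of (B), where what you write is either a restatement of the theorem or an unproven heuristic. Your central claim --- that ``the lexicographic inequalities needed to close off two chained free positions with univoque leaves can be satisfied only when $\alpha(q)\succeq(1100)^\infty$'' --- \emph{is} the statement $\B_3\cap(1,q_f)=\varnothing$ (a free position with one univoque branch, whose other branch is forced into a second free position with two univoque branches, is exactly a point with three expansions); no proof of it is offered. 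The reason it cannot be disposed of by a soft ``trapping'' argument is that $q_2\approx 1.71064<q_f$ lies in $\B_2$: the points $(01(10)^{\infty})_{q_2}$ and $(0111(10)^{\infty})_{q_2}$ lie in $S_{q_2}$ and both of their branches are univoque. So below $q_f$ it is perfectly possible to close off \emph{one} free position with univoque leaves; what must be excluded is that a forced block starting from a branch of some free position lands exactly on one of these two-expansion points (which would produce $3$ or $4$ expansions), and your sketch contains no mechanism that distinguishes this forbidden configuration from the configuration that demonstrably does occur at $q_2$.

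The missing ingredients are (i) a reduction and (ii) the exclusion at $q_2$. By Lemma~\ref{Branching lemma dynamical} one gets $\B_k\subset\B_2$ for all $k\geq 3$ (Corollary~\ref{subset lemma}), and by Theorem~\ref{Nik's thm} (quoted from \cite{Sid1}) the only element of $\B_2$ below $q_f$ is $q_2$; a second application of branching reduces everything to the two statements $q_2\notin\B_3$ and $q_2\notin\B_4$. This replaces your ``all $q<q_f$ at once'' problem by a question about a single algebraic base, and that question is where the paper's real work lies: one first proves that the only points of $S_{q_2}$ with two expansions are the two listed above, then that a point with exactly two expansions visits $S_{q_2}$ exactly once, then classifies all $x$ with $\#\Sigma_{q_2}(x)=\#\Sigma_{q_2}(x+1)=2$ into four parametrised families, and finally shows family by family --- via explicit orbit computations and, in the exceptional cases, closed-form identities resting on $q_2^4=2q_2^2+q_2+1$ --- that $x+1$ admits a passage through $S_{q_2}$ distinct from the two-expansion points, a contradiction. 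Nothing in your proposal substitutes for this analysis.
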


The range of ${q}>\frac{1+\sqrt5}2$  which are ``sufficiently close'' to the golden ratio is referred to in \cite{Sid1} as the {\em lower order}, which explains the title of the present paper.

During our proof of Theorem~\ref{Main thm} we will also show that ${q}_{f}\in \B_{\aleph_{0}}$, which combined with our earlier remarks, Theorem~\ref{Nik's thm}, Theorem~\ref{Main thm} and a result in \cite{Sid} which states that for ${q}\in[\frac{1+\sqrt{5}}{2},2)$ almost every $x\in I_{q}$ has a continuum of ${q}$-expansions, we can conclude the following.

\begin{thm}
\label{all cardinalities}
In base $q_{f}$ all situations occur: there exist $x\in I_{q}$ having exactly $k$
$q$-expansions for each $k=1,2,\ldots$, $k=\aleph_0$ or $k=2^{\aleph_0}$. Moreover, $q_{f}$ is the smallest $q\in(1,2)$ satisfying this property.
\end{thm}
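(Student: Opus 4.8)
The plan is to assemble Theorem~\ref{all cardinalities} from the results already established, since essentially all of the genuine work has been carried out in Theorem~\ref{Nik's thm}, Theorem~\ref{Main thm}, and the embedded claim that $q_{f}\in\B_{\aleph_{0}}$. For the first assertion I would verify each cardinality in turn. For $k=1$, since $q_{f}>\frac{1+\sqrt5}{2}$ and $\B_{1}=(\frac{1+\sqrt5}{2},2)$, there is $x\in(0,\frac1{q_{f}-1})$ with a unique expansion. For $k=2$, Theorem~\ref{Nik's thm} states that $q_{f}$ is the (second smallest) element of $\B_{2}$, so $q_{f}\in\B_{2}$. For every $k\geq3$, Theorem~\ref{Main thm} gives $\min\B_{k}=q_{f}$, whence $q_{f}\in\B_{k}$. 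The case $k=\aleph_{0}$ is exactly the auxiliary statement $q_{f}\in\B_{\aleph_{0}}$ obtained en route to Theorem~\ref{Main thm}. Finally, for $k=2^{\aleph_{0}}$ I would invoke the result of \cite{Sid} that for $q\in[\frac{1+\sqrt5}{2},2)$ almost every $x\in I_{q}$ has a continuum of $q$-expansions; since $q_{f}$ lies in this range, such $x$ exist. Together these exhaust all the required cardinalities.

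For the minimality claim I would argue by exclusion, showing that no $q\in(1,q_{f})$ realises all cardinalities. The cleanest obstruction is the cardinality $k=3$. By Theorem~\ref{Main thm} we have $\min\B_{3}=q_{f}$, so for every $q<q_{f}$ one has $q\notin\B_{3}$; that is, no $x\in(0,\frac1{q-1})$ admits exactly three $q$-expansions. Hence the conclusion of Theorem~\ref{all cardinalities} already fails for these $q$. One may note in passing that for $q\in(1,\frac{1+\sqrt5}{2}]$ the situation is even more degenerate: by \cite{Erdos} and \cite{SidVer} every interior $x$ has uncountably or countably infinitely many expansions, so no finite cardinality $k\geq2$ is attained at all; but the single fact $\min\B_{3}=q_{f}$ already covers the whole range $q<q_{f}$.

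The ``hard part'' of Theorem~\ref{all cardinalities} is therefore not in its own proof but in the inputs it consumes: the exact identification $\min\B_{k}=q_{f}$ for $k\geq3$ in Theorem~\ref{Main thm}, together with the construction witnessing $q_{f}\in\B_{\aleph_{0}}$. Given these, the argument above is a short synthesis: each finite and infinite cardinality is supplied by a named result, and minimality reduces to the single observation that $\B_{3}$ has $q_{f}$ as its smallest element. The only point requiring a little care is the bookkeeping between the open interval $(0,\frac1{q-1})$ used in the definition of $\B_{k}$ and the closed interval $I_{q}$ appearing in the statement; since the endpoints contribute only unique expansions, this distinction does not affect any of the cardinalities $k\geq2$ and is harmless for $k=1$.
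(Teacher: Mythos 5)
Your proposal is correct and matches the paper's own argument: the paper likewise obtains Theorem~\ref{all cardinalities} by combining $\B_{1}=(\frac{1+\sqrt5}{2},2)$, Theorem~\ref{Nik's thm}, Theorem~\ref{Main thm} (equivalently Proposition~\ref{k expansions prop}), the fact that $q_{f}\in\B_{\aleph_{0}}$, and the result of \cite{Sid} for the continuum case, with minimality following because $\min\B_{k}=q_{f}$ for $k\geq 3$ excludes every $q<q_{f}$. Your explicit use of $\B_{3}$ as the obstruction for minimality is exactly the implicit content of the paper's citation of Theorem~\ref{Main thm}.
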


Before proving Theorem~\ref{Main thm} it is necessary to recall some theory. In what follows we fix $T_{{q},0}(x)={q} x$ and $T_{{q},1}(x)={q} x -1$, we will typically denote an element of $\bigcup_{n=0}^{\infty}\{T_{{q},0},T_{{q},1}\}^{n}$ by $a;$ here $\{T_{{q},0},T_{{q},1}\}^{0}$ denotes the set consisting of the identity map. Moreover, if $a= (a_{1},\ldots ,a_{n})$ we shall use $a(x)$ to denote $(a_{n}\circ \ldots  \circ a_{1})(x)$ and $|a|$ to denote the length of $a$.

We let $$\Omega_{q}(x)=\Big\{(a_{i})_{i=1}^{\infty}\in \{T_{{q},0},T_{{q},1}\}^{\mathbb{N}}:(a_{n}\circ \ldots \circ a_{1})(x)\in I_{q}
 \textrm{ for all } n\in\mathbb{N}\Big\}.$$
The significance of $\Omega_{q}(x)$ is made clear by the following lemma.

\begin{lemma}
\label{Bijection lemma}
$\#\Sigma_{q}(x)=\#\Omega_{q}(x)$ where our bijection identifies $(\epsilon_{i})_{i=1}^{\infty}$ with $(T_{{q},\epsilon_{i}})_{i=1}^{\infty}$.
	
\end{lemma}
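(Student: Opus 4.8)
The plan is to exhibit the stated map together with its inverse explicitly, resting everything on a single algebraic identity that relates the maps $T_{q,0},T_{q,1}$ to the operation of stripping the leading digit from a $q$-expansion. Since $q>1$, the maps $T_{q,0}(x)=qx$ and $T_{q,1}(x)=qx-1$ are distinct, so $\epsilon\mapsto T_{q,\epsilon}$ is a bijection of $\{0,1\}$ onto $\{T_{q,0},T_{q,1}\}$; consequently $(\epsilon_i)_{i=1}^{\infty}\mapsto(T_{q,\epsilon_i})_{i=1}^{\infty}$ is a bijection of $\{0,1\}^{\mathbb N}$ onto $\{T_{q,0},T_{q,1}\}^{\mathbb N}$ at the level of symbol sequences. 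The whole content of the lemma is therefore that this symbol-level bijection maps $\Sigma_{q}(x)$ exactly onto $\Omega_{q}(x)$.

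For the forward inclusion, suppose $(\epsilon_i)\in\Sigma_{q}(x)$ and set $x_0=x$ and $x_n=(T_{q,\epsilon_n}\circ\cdots\circ T_{q,\epsilon_1})(x)$. The point is that $x_n$ is precisely the tail sum $\sum_{j=1}^{\infty}\epsilon_{n+j}q^{-j}$, which I would verify by induction using the one-step identity $T_{q,\epsilon_n}(x_{n-1})=qx_{n-1}-\epsilon_n=x_n$, immediate from the definition of the expansion. Since each $x_n$ is a sum over a subset of the geometric series $\sum q^{-j}$, it lies in $[0,\frac{1}{q-1}]=I_{q}$, and hence $(T_{q,\epsilon_i})\in\Omega_{q}(x)$.

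For the reverse inclusion I would run the same recursion backwards. Given $(T_{q,\epsilon_i})\in\Omega_{q}(x)$, define $x_n=(T_{q,\epsilon_n}\circ\cdots\circ T_{q,\epsilon_1})(x)$, so that $x_n=qx_{n-1}-\epsilon_n$, equivalently $x_{n-1}=q^{-1}(x_n+\epsilon_n)$. Telescoping this relation yields the exact finite identity
\[
x=\sum_{i=1}^{n}\frac{\epsilon_i}{q^i}+\frac{x_n}{q^n}\qquad\text{for every }n\in\mathbb N.
\]
The hypothesis $(T_{q,\epsilon_i})\in\Omega_{q}(x)$ now enters decisively: it guarantees $x_n\in I_{q}$, so $0\le x_n\le\frac{1}{q-1}$ is bounded, whence the remainder $q^{-n}x_n\to0$ as $n\to\infty$ because $q>1$. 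Letting $n\to\infty$ gives $x=\sum_{i=1}^{\infty}\epsilon_iq^{-i}$, i.e. $(\epsilon_i)\in\Sigma_{q}(x)$. As both constructions use the same symbol correspondence and the same sequence $(x_n)$, they are mutual inverses, which completes the proof.

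The argument is short, and the only place requiring genuine care is the convergence step in the reverse direction: it is exactly the boundedness built into the definition of $\Omega_{q}(x)$ (orbit points never leaving $I_{q}$) that forces the error term $q^{-n}x_n$ to vanish and makes the series converge to $x$. Were orbits permitted to escape $I_{q}$, convergence could fail, so I would be careful to flag that this is precisely where the defining condition of $\Omega_{q}$ is used.
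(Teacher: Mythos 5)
Your proof is correct and complete. Note that the paper does not actually prove this lemma at all---it defers the proof to the reference \cite{Baker}---and your argument (the one-step identity $T_{q,\epsilon_n}(x_{n-1})=qx_{n-1}-\epsilon_n$ giving the tail-sum interpretation in one direction, and the telescoped identity $x=\sum_{i=1}^{n}\epsilon_i q^{-i}+q^{-n}x_n$ with the boundedness of $x_n\in I_q$ killing the remainder in the other) is precisely the standard proof that such a reference supplies, with the key use of the defining condition of $\Omega_q(x)$ correctly identified.
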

The proof of Lemma~\ref{Bijection lemma} is contained within \cite{Baker}. It is an immediate consequence of Lemma~\ref{Bijection lemma} that we can interpret Theorem~\ref{Main thm} in terms of $\Omega_{q}(x)$ rather than $\Sigma_{q}(x)$.

An element $x\in I_{q}$ satisfies $T_{{q},0}(x)\in I_{q}$ and $T_{{q},1}(x)\in I_{q}$ if and only if $x\in[\frac{1}{q},\frac{1}{{q}({q}-1)}]$. Moreover, if $\#\Sigma_{q}(x)>1$ or equivalently $\#\Omega_{q}(x)>1$, then there exists a unique minimal sequence of transformations $a$ such that $a(x)\in[\frac{1}{q},\frac{1}{{q}({q}-1)}]$. In what follows we let $S_{q}:=[\frac{1}{q},\frac{1}{{q}({q}-1)}]$. The set $S_q$ is usually referred to as the {\em switch region}. We will also make regular use of the fact that if $x\in I_{q}$ and $a$ is a sequence of transformations such that $a(x)\in I_{q}$, then
\begin{equation}
\label{Cardinality inequality}
\#\Omega_{q}(x)\geq \#\Omega_{q}(a(x)) \textrm{ or equivalently }\#\Sigma_{q}(x)\geq \#\Sigma_{q}(a(x)),
\end{equation}
this is immediate from the definition of $\Omega_{q}(x)$ and Lemma~\ref{Bijection lemma}.

In the course of our proof of Theorem~\ref{Main thm} we will frequently switch between $\Sigma_{q}(x)$ and the dynamical interpretation of $\Sigma_{q}(x)$ provided by Lemma~\ref{Bijection lemma}, often considering $\Omega_{q}(x)$ will help our exposition.

The following lemma is a consequence of \cite[Theorem~2]{GlenSid}.

\begin{lemma}
\label{Unique expansions lemma}
Let ${q}\in(\frac{1+\sqrt{5}}{2},{q}_{f}]$, if $x\in I_{q}$ has a unique ${q}$-expansion $(\epsilon_{i})_{i=1}^{\infty}$, then $$(\epsilon_{i})_{i=1}^{\infty}\in \Big\{0^{k}(10)^{\infty},1^{k}(10)^{\infty},0^{\infty},1^{\infty}\Big\},$$ where $k\ge0$. Similarly, if $(\epsilon_{i})_{i=1}^{\infty}\in \{0^{k}(10)^{\infty},1^{k}(10)^{\infty},0^{\infty},1^{\infty}\}$ then for ${q}\in(\frac{1+\sqrt{5}}{2},2)$ $x=((\epsilon_{i})_{i=1}^{\infty})_{q}$ has a unique ${q}$-expansion given by $(\epsilon_{i})_{i=1}^{\infty}$.
\end{lemma}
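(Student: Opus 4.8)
The plan is to pass to the symbolic setting and invoke the lexicographic description of unique expansions provided by \cite[Theorem~2]{GlenSid}. Write $(\alpha_i)=(\alpha_i(q))$ for the quasi-greedy expansion of $1$ in base $q$, i.e.\ the lexicographically largest $q$-expansion of $1$ not ending in $0^\infty$. That result says $(\epsilon_i)$ is the unique $q$-expansion of $((\epsilon_i))_q$ if and only if
\[
\epsilon_{n+1}\epsilon_{n+2}\cdots \prec \alpha_1\alpha_2\cdots \ \text{ whenever } \epsilon_n=0,\qquad \ov{\epsilon_{n+1}}\,\ov{\epsilon_{n+2}}\cdots \prec \alpha_1\alpha_2\cdots \ \text{ whenever } \epsilon_n=1,
\]
where $\ov{\epsilon}=1-\epsilon$ and $\prec$ is the strict lexicographic order. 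Since $(\alpha_i(q))$ increases with $q$, these conditions become easier to satisfy as $q$ grows, so the set of admissible sequences grows with $q$; hence for the forward implication it is enough to treat the top of the range, $q=q_f$.

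The next step is to identify $(\alpha_i(q_f))$. Using $q_f^3=2q_f^2-q_f+1$ one checks that after reading $110$ the running point of the greedy algorithm for $1$ equals $(q_f-1)^2=\tfrac1{q_f}$, the left endpoint of the switch region $S_{q_f}$, so the greedy expansion of $1$ is $11010^\infty$ and the quasi-greedy expansion is therefore $(1100)^\infty$; this is confirmed by the factorisation $q^4-q^3-q^2-1=(q+1)(q^3-2q^2+q-1)$, which shows $\sum(1100)^\infty q_f^{-i}=1$. Thus the forward direction reduces to the purely combinatorial claim that the only sequences obeying the two displayed conditions with $(\alpha_i)=(1100)^\infty$ are $0^\infty,1^\infty,0^k(10)^\infty,1^k(10)^\infty$.

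I would prove this combinatorially. First, a sequence that is eventually constant but not equal to $0^\infty$ or $1^\infty$ is excluded: a final $1$ followed by $0^\infty$ violates the second condition, since $\ov{0^\infty}=1^\infty\succ(1100)^\infty$, and symmetrically for a final $0$; so any admissible $(\epsilon_i)\neq 0^\infty,1^\infty$ contains both symbols infinitely often. The heart of the matter is that such a sequence can contain neither the factor $011$ nor the factor $100$. If $\epsilon_m\epsilon_{m+1}\epsilon_{m+2}=011$, then the first condition at $m$ forces $\epsilon_{m+3}=\epsilon_{m+4}=0$ (a $1$ at either place would push the tail past $(1100)^\infty$), the second condition at $m+2$ then forces $\epsilon_{m+5}=\epsilon_{m+6}=1$, and this reproduces $011$ four places along; iterating yields $\epsilon_{m+1}\epsilon_{m+2}\cdots=(1100)^\infty$, contradicting the \emph{strict} inequality demanded at $m$. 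The factor $100$ is excluded by the symmetry $(\epsilon_i)\mapsto(\ov{\epsilon_i})$, which interchanges the two conditions. Avoiding $011$ and $100$ means exactly that a run of length $\ge2$ can occur only at the start, so after an initial block $0^k$ or $1^k$ the sequence alternates, giving precisely the four stated forms. This self-similar forcing step, where both lexicographic conditions must be used in tandem and the strictness of $\prec$ is essential, is the one place I expect real work; everything else is bookkeeping or the reflection symmetry.

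For the converse I would argue dynamically, which has the advantage of covering the whole range $q\in(\tfrac{1+\sqrt5}2,2)$. By Lemma~\ref{Bijection lemma} and the remarks around \eqref{Cardinality inequality} it suffices to show that the forced orbit of the relevant $x$ never meets $S_q=[\tfrac1q,\tfrac1{q(q-1)}]$. The endpoints $0$ and $\tfrac1{q-1}$ of $I_q$ trivially have unique expansions. For $(10)^\infty$ the point is $x_0=\tfrac{q}{q^2-1}$, with $T_{q,1}(x_0)=\tfrac1{q^2-1}$ and $T_{q,0}(\tfrac1{q^2-1})=x_0$, a period-two orbit; both $x_0>\tfrac1{q(q-1)}$ and $\tfrac1{q^2-1}<\tfrac1q$ reduce to $q^2-q-1>0$, i.e.\ $q>\tfrac{1+\sqrt5}2$, so the orbit avoids $S_q$ and the digits read off are forced to be $(10)^\infty$. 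Prefixing zeros scales $x_0$ by powers of $q^{-1}$, keeping the point below $\tfrac1q$ and hence forcing leading $0$'s, which settles $0^k(10)^\infty$; finally the cases $1^k(10)^\infty$ follow from the reflection $R(x)=\tfrac1{q-1}-x$, which conjugates $T_{q,0}$ and $T_{q,1}$, fixes $S_q$ setwise, and carries $0^{k+1}(10)^\infty$ to $1^k(10)^\infty$.
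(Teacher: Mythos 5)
Your proposal is correct, but it does genuinely more than the paper, whose ``proof'' of Lemma~\ref{Unique expansions lemma} is a pure citation: the classification of unique expansions for $q\in(\frac{1+\sqrt{5}}{2},q_f]$ is exactly the content of the structure theorem \cite[Theorem~2]{GlenSid} that the paper invokes, with no further argument given. What you attribute to that theorem---the lexicographic criterion that $(\epsilon_i)$ is univoque iff $\epsilon_{n+1}\epsilon_{n+2}\cdots\prec(\alpha_i)$ whenever $\epsilon_n=0$ and $\ov{\epsilon_{n+1}}\,\ov{\epsilon_{n+2}}\cdots\prec(\alpha_i)$ whenever $\epsilon_n=1$---is really the Erd\H{o}s--Jo\'o--Komornik criterion, which enters \cite{GlenSid} only as a preliminary tool; so in effect you reconstruct the relevant half of the cited structure theorem rather than quote it. Your reconstruction is sound at every step: the reduction to $q=q_f$ via lexicographic monotonicity of $q\mapsto(\alpha_i(q))$ is legitimate; the identification of the quasi-greedy expansion of $1$ in base $q_f$ as $(1100)^\infty$ is correct (the greedy expansion is $11010^\infty$ because the remainder after $110$ is $(q_f-1)^2=1/q_f$, and the factorisation $q^4-q^3-q^2-1=(q+1)(q^3-2q^2+q-1)$ confirms the periodic expansion); the forbidden-factor argument is the real content and works as you describe, since an occurrence of $011$ forces, by alternating use of the two conditions, the tail to equal $(1100)^\infty$ exactly, contradicting strictness of $\prec$, while $100$ is ruled out by the complementation symmetry; and the run-length bookkeeping then yields precisely the four families. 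Your dynamical proof of the converse is also correct, and has the advantage of working uniformly on $(\frac{1+\sqrt{5}}{2},2)$ with no lexicographic input: the period-two orbit $\{\frac{q}{q^2-1},\frac{1}{q^2-1}\}$ avoids $S_q$ precisely because $q^2-q-1>0$, leading zeros stay below $\frac1q$, and the reflection $x\mapsto\frac{1}{q-1}-x$ handles $1^k(10)^\infty$. The trade-off between the two routes is clear: the paper's citation is one line, while your argument makes the lemma self-contained modulo two standard facts (the univoque criterion and monotonicity of quasi-greedy expansions), which would be worthwhile if one wanted the paper not to depend on the classification theorem of \cite{GlenSid} as a black box.
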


In Lemma~\ref{Unique expansions lemma} we have adopted the notation $(\epsilon_{1}\dots\epsilon_{n})^{k}$ to denote the concatenation of $(\epsilon_{1}\dots \epsilon_{n})\in \{0,1\}^{n}$ by itself $k$ times and $(\epsilon_{1}\dots \epsilon_{n})^{\infty}$ to denote the infinite sequence obtained by concatenating $\epsilon_{1}\dots \epsilon_{n}$ by itself infinitely many times, and we will use this notation throughout.

The following lemma follows from the branching argument first introduced in \cite{Sid2}.
\begin{lemma}
\label{Branching lemma dynamical}
Let $k\geq 2$, $x\in I_{q}$ and suppose $\#\Sigma_{q}(x)=k$ or equivalently $\#\Omega_{q}(x)=k$. If $a$ is the unique minimal sequence of transformations such that $a(x)\in S_{q}$, then
$$
\#\Omega_{q}(T_{{q},1}(a(x)))+\#\Omega_{q}(T_{{q},0}(a(x)))=k.
$$
Moreover, $1\leq \#\Omega_{q}(T_{{q},1}(a(x)))<k$ and $1\leq \#\Omega_{q}(T_{{q},0}(a(x)))<k$.
\end{lemma}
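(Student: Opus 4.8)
The plan is to exploit the dynamical description from Lemma~\ref{Bijection lemma} together with the fact that, away from the switch region $S_q$, the choice of transformation that keeps us inside $I_q$ is forced. First I would record the elementary computation identifying which branch survives: for $x\in I_q$ one has $T_{q,0}(x)=qx\in I_q$ precisely when $x\le\frac1{q(q-1)}$, while $T_{q,1}(x)=qx-1\in I_q$ precisely when $x\ge\frac1q$. Hence if $x\in I_q\setminus S_q$ then exactly one of $T_{q,0}(x),T_{q,1}(x)$ lies in $I_q$, whereas if $x\in S_q$ both do. This is the structural fact underlying the entire argument.

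Next I would show that the minimal prefix $a$ is forced and induces a bijection $\Omega_q(x)\to\Omega_q(a(x))$. By the definition of the minimal $a$ with $a(x)\in S_q$, each intermediate iterate obtained by applying a proper initial segment of $a$ to $x$ lies in $I_q\setminus S_q$; by the previous paragraph only one transformation keeps each such iterate in $I_q$, and that transformation is precisely the corresponding entry of $a$. Consequently every $(a_i)_{i=1}^\infty\in\Omega_q(x)$ must begin with $a$, and deleting this common prefix gives a bijection onto $\Omega_q(a(x))$. In particular $\#\Omega_q(a(x))=\#\Omega_q(x)=k$.

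I would then split $\Omega_q(a(x))$ according to its first symbol. Since $a(x)\in S_q$, both $T_{q,0}(a(x))$ and $T_{q,1}(a(x))$ lie in $I_q$, so each admits at least one expansion. The sequences in $\Omega_q(a(x))$ beginning with $T_{q,\epsilon}$ are, after deleting that first symbol, in bijection with $\Omega_q(T_{q,\epsilon}(a(x)))$ for $\epsilon\in\{0,1\}$, and the two families are disjoint. Taking cardinalities yields
$$\#\Omega_q(a(x))=\#\Omega_q(T_{q,0}(a(x)))+\#\Omega_q(T_{q,1}(a(x))),$$
which, combined with the previous paragraph, gives the claimed identity with total $k$. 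For the moreover part, each summand is at least $1$ because both points lie in $I_q$; since the two nonnegative integers sum to $k$ and each is at least $1$, each is at most $k-1$, hence strictly less than $k$.

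The only genuinely delicate point is the forced-prefix bijection: one must verify carefully that minimality of $a$ guarantees that all intermediate iterates avoid $S_q$, so that no branching can occur before $a(x)$ is reached and the common prefix $a$ really is shared by every element of $\Omega_q(x)$. Everything else reduces to bookkeeping with the two affine maps $T_{q,0}$ and $T_{q,1}$.
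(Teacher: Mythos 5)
Your proof is correct, and it is essentially the argument the paper relies on: the paper gives no proof of this lemma at all, simply asserting that it ``follows from the branching argument first introduced in \cite{Sid2}'', and your write-up is precisely that branching argument spelled out in full --- forced dynamics outside the switch region $S_q$, hence a common prefix $a$ for all elements of $\Omega_q(x)$, followed by the disjoint split of $\Omega_q(a(x))$ according to whether $T_{q,0}$ or $T_{q,1}$ is applied at the switch point.
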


The following result is an immediate consequence of Lemma~\ref{Bijection lemma} and Lemma~\ref{Branching lemma dynamical}.

\begin{cor}
\label{subset lemma}
$\B_{k}\subset \B_{2}$ for all $k\geq 3$.
\end{cor}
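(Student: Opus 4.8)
The plan is to argue by strong induction on $k$, establishing $\B_k\subseteq\B_2$ for every $k\ge2$. The base case $k=2$ is trivial, and the corollary is exactly the collection of these inclusions for $k\ge3$. So I would fix $k\ge3$, assume $\B_m\subseteq\B_2$ for all $2\le m<k$, and take $q\in\B_k$. By definition there is an $x\in(0,\frac1{q-1})$ with $\#\Sigma_q(x)=k$, equivalently $\#\Omega_q(x)=k$ by Lemma~\ref{Bijection lemma}.

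Next I would invoke the branching lemma. Since $k\ge2$, Lemma~\ref{Branching lemma dynamical} produces the unique minimal $a$ with $a(x)\in S_q$ and gives
$$\#\Omega_q(T_{q,1}(a(x)))+\#\Omega_q(T_{q,0}(a(x)))=k,\qquad 1\le\#\Omega_q(T_{q,i}(a(x)))<k.$$
Writing $k_0=\#\Omega_q(T_{q,0}(a(x)))$ and $k_1=\#\Omega_q(T_{q,1}(a(x)))$, we have $k_0+k_1=k$ with $1\le k_i\le k-1$. Because $a(x)\in S_q=[\frac1q,\frac1{q(q-1)}]$, both images $T_{q,0}(a(x))=q\,a(x)$ and $T_{q,1}(a(x))=q\,a(x)-1$ lie in $I_q$, which is precisely what makes these counts meaningful.

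The key numerical observation is that, since $k\ge3$, at least one of $k_0,k_1$ is at least $2$: indeed $\max(k_0,k_1)\ge\lceil k/2\rceil\ge2$, while at the same time $\max(k_0,k_1)\le k-1$. Let $y$ be the corresponding point among $T_{q,0}(a(x)),T_{q,1}(a(x))$, so that $\#\Omega_q(y)=m$, equivalently $\#\Sigma_q(y)=m$, with $2\le m\le k-1$. As $y$ has at least two expansions it cannot be an endpoint of $I_q$ (the endpoints are uniquely expandable for every $q\in(1,2)$), so $y\in(0,\frac1{q-1})$ and $y$ genuinely witnesses $q\in\B_m$. Since $2\le m<k$, the induction hypothesis gives $q\in\B_2$, completing the step.

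There is essentially no deep obstacle here, the statement being immediate once Lemma~\ref{Branching lemma dynamical} is available; the only points demanding care are bookkeeping, namely confirming that $y$ lies in the \emph{open} interval (so that it certifies membership of some $\B_m$ rather than merely sitting in $I_q$) and checking that one branch necessarily carries at least two expansions, so that $m\ge2$ and the induction applies. An equivalent, more visual formulation that I could present instead is to iterate the branching directly: at each stage one retains a branch whose number of expansions lies in $\{2,\dots,m-1\}$, producing a strictly decreasing sequence of integers bounded below by $2$, which must terminate at a point with exactly two expansions.
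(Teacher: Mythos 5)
Your proof is correct and follows exactly the route the paper intends: the paper states Corollary~\ref{subset lemma} as an ``immediate consequence'' of Lemma~\ref{Bijection lemma} and Lemma~\ref{Branching lemma dynamical}, and your strong induction (picking the branch with $\max(k_0,k_1)\in\{2,\dots,k-1\}$ expansions) is precisely the bookkeeping that makes this immediacy rigorous. Your care in noting that a point with at least two expansions cannot be an endpoint of $I_q$, hence lies in the open interval required by the definition of $\B_m$, is a correct and worthwhile detail.
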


An outline of our proof of Theorem~\ref{Main thm} is as follows: first of all we will show that ${q}_{f}\in \B_{k}$ for all $k\geq 1$, then by Theorem~\ref{Nik's thm} and Corollary~\ref{subset lemma} to prove Theorem~\ref{Main thm} it suffices to show that ${q}_{2}\notin \B_{k}$ for all $k\geq 3$. But by an application of Lemma~\ref{Branching lemma dynamical} to show that ${q}_{2}\notin \B_{k}$ for all $k\geq 3$ it suffices to show that ${q}_{2}\notin \B_{3}$ and ${q}_{2}\notin \B_{4}$.

\section{Proof that ${q}_{f}\in \B_{k}$ for all $k\geq 1$}

To show that ${q}_{f}\in \B_{k}$ for all $k\geq 1$, we construct an $x\in I_{{q}_{f}}$ satisfying $\#\Sigma_{{q}_{f}}(x)=k$ explicitly.

\begin{prop}
\label{k expansions prop}
For each $k\geq 1$ the number $x_{k}=(1(0000)^{k-1}0(10)^{\infty})_{{q}_{f}}$ satisfies $\#\Sigma_{{q}_{f}}(x_{k})=k$. Moreover, $x_{\aleph_{0}}=(10^{\infty})_{{q}_{2}}$ satisfies $\mathrm{card}\,\Sigma_{{q}_{f}}(x)=\aleph_{0}$.
\end{prop}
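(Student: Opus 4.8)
The plan is to work throughout with $\Omega_{q}$ rather than $\Sigma_{q}$ via Lemma~\ref{Bijection lemma}, and to exploit the algebraic identities forced by the minimal polynomial of $q_{f}$. Write $p=q_{f}$ and $y=((10)^{\infty})_{p}$, so that the designated expansion gives $x_{k}=p^{-1}+p^{-(4k-2)}y$. From $p^{3}=2p^{2}-p+1$ one reads off $p(p-1)^{2}=1$, hence the switch region is $S_{p}=[p^{-1},p-1]$ with $\frac{1}{p(p-1)}=p-1$ and $(p-1)^{2}=p^{-1}$; one also gets $p^{2}-p=(p-1)^{-1}$ and $p^{3}-p^{2}-p=(p-1)^{2}$. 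These exact relations, rather than numerical approximations, are what make the orbit land precisely on the boundary of $S_{p}$ at the right moments, and I would record them first.

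For finite $k$ I would argue by induction, the statement being $\#\Omega_{p}(x_{k})=k$. The base case $k=1$ is immediate: $x_{1}=((10)^{\infty})_{p}$ is of the form $0^{0}(10)^{\infty}$ and so has a unique expansion by Lemma~\ref{Unique expansions lemma}. For $k\ge2$ I would first check the two inequalities showing $x_{k}\in S_{p}$ (the lower bound is trivial and the upper bound $p^{-(4k-2)}y\le p-1-p^{-1}$ is tightest at $k=2$), so that the minimal branching sequence in Lemma~\ref{Branching lemma dynamical} is the identity and $\#\Omega_{p}(x_{k})=\#\Omega_{p}(T_{p,1}(x_{k}))+\#\Omega_{p}(T_{p,0}(x_{k}))$. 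The first term equals $1$, since $T_{p,1}(x_{k})=(0^{4k-3}(10)^{\infty})_{p}$ again has a unique expansion. For the second term I would trace the forced orbit of $T_{p,0}(x_{k})=1+p^{-(4k-3)}y$: using the identities above one checks that $1+p^{-(4k-3)}y>p-1$, then $T_{p,1}$ of it is $(p-1)+p^{-(4k-4)}y>p-1$, then $T_{p,1}$ again gives $(p^{2}-p-1)+p^{-(4k-5)}y<p^{-1}$, so that the moves $T_{p,1},T_{p,1},T_{p,0}$ are forced in this order; the final $T_{p,0}$ produces exactly $p^{-1}+p^{-(4k-6)}y=x_{k-1}$. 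Since none of the three intermediate points lies in $S_{p}$, no branching occurs before $x_{k-1}$, whence $\#\Omega_{p}(T_{p,0}(x_{k}))=\#\Omega_{p}(x_{k-1})$ and $\#\Omega_{p}(x_{k})=1+\#\Omega_{p}(x_{k-1})=k$. Equivalently, every expansion of $x_{k}$ is either the designated one beginning with $1$, or begins with the forced block $0110$ followed by an expansion of $x_{k-1}$.

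For the final assertion I would take $x_{\aleph_{0}}=(10^{\infty})_{q_{f}}=p^{-1}=\lim_{k}x_{k}$ (the ``$q_{2}$'' in the statement being a misprint for $q_{f}$) and analyse the orbit of $p^{-1}$ directly. Using the same identities one finds that the dynamics confine this orbit to the finite set $\{0,p^{-1},1,p-1,p^{2}-p-1,(p-1)^{-1}\}$, organised as a single cycle $p^{-1}\to1\to(p-1)\to(p^{2}-p-1)\to p^{-1}$ (moves $T_{p,0},T_{p,1},T_{p,1},T_{p,0}$), whose only two branch points are $p^{-1}$ and $p-1$. At each branch point exactly one choice leaves the cycle into an absorbing unique-expansion tail ($T_{p,1}$ sends $p^{-1}$ to $0^{\infty}$, and $T_{p,0}$ sends $p-1$ to $1^{\infty}$), while the other choice continues around the cycle. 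Hence an expansion is specified either by never leaving the cycle (the single periodic expansion $(0110)^{\infty}$) or by the index of the branch visit at which it exits, together with the binary choice of exit; this is a countable data set, so $\#\Omega_{p}(p^{-1})=\aleph_{0}$.

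The main obstacle I anticipate is not the induction itself but the exact bookkeeping that underlies it: one must use the identities $p(p-1)^{2}=1$ and $p^{3}-p^{2}-p=(p-1)^{2}$ to verify that each intermediate point of the forced orbit of $T_{p,0}(x_{k})$ really fails to lie in $S_{p}$ (so that branching is genuinely deferred to $x_{k-1}$), and to confirm that the orbit lands on $x_{k-1}$ on the nose rather than merely nearby. A secondary delicacy is the cardinality count in the $\aleph_{0}$ case: one must argue that the branch structure yields only countably many expansions---because each exit is immediately absorbing---rather than a continuum.
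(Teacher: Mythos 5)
Your proposal is correct and follows essentially the same route as the paper's own proof: an induction on $k$ in which any expansion of $x_{k}$ beginning with $0$ is forced to begin with the block $0110$, after which its tail is an expansion of $x_{k-1}$, together with the same branch-and-exit analysis (the cycle $0110$ with two absorbing exits $0^{\infty}$ and $1^{\infty}$) producing the countable family of expansions of $(10^{\infty})_{q_{f}}$ --- and you are right that the ``$q_{2}$'' in the statement is a misprint for $q_{f}$. The only difference is presentational: you carry out the forcing dynamically via the orbit under $T_{q_{f},0},T_{q_{f},1}$ and exact identities derived from $p^{3}=2p^{2}-p+1$, whereas the paper argues with digit-sequence inequalities and one explicitly checked polynomial inequality.
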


\begin{proof}
We proceed by induction. For $k=1$ we have $x_{1}=((10)^{\infty})_{{q}_{f}}$, therefore $\#\Sigma_{{q}_{f}}(x_{1})=1$ by Lemma~\ref{Unique expansions lemma}. Let us assume
$x_{k}=(1(0000)^{k-1}0(10)^{\infty})_{{q}_{f}}$ satisfies $\#\Sigma_{{q}_{f}}(x_{k})=k$, to prove our result it suffices to show that $x_{k+1}=(1(0000)^{k}0(10)^{\infty})_{{q}_{f}}$ satisfies $\#\Sigma_{{q}_{f}}(x_{k+1})=k+1$.

We begin by remarking that by Lemma~\ref{Unique expansions lemma} $((0000)^{k}0(10)^{\infty}))_{{q}_{f}}$ has a unique ${q_{f}}$-expansion, therefore there is a unique ${q_{f}}$-expansion of $x_{k+1}$ beginning with 1. Furthermore, it is a simple exercise to show that ${q}_{f}$ satisfies the equation $x^{4}=x^{3}+x^{2}+1$, which implies that  $(0(1101)(0000)^{k-1}0(10)^{\infty})$ is also a ${q}_f$-expansion for $x_{k+1}$.

To prove the claim, we will show that if $(\epsilon_{i})_{i=1}^{\infty}$ is a ${q}$-expansion for $x_{k+1}$ and $\epsilon_{1}=0$, then $\epsilon_{2}=1$, $\epsilon_{3}=1$ and $\epsilon_{4}=0$, which combined with our inductive hypothesis implies that the set of ${q}$-expansions for $x_{k+1}$ satisfying $\epsilon_{1}=0$ consists of $k$ distinct elements, combining these ${q}$-expansions with the unique ${q}$-expansion of $x_{k+1}$ satisfying $\epsilon_{1}=1$ we may conclude $\#\Sigma_{{q}_{f}}(x_{k+1})=k+1$.

Let us suppose $\epsilon_{1}=0$, if $\epsilon_{2}=0$; then we would require $$x_{k+1}=(1(0000)^{k}0(10)^{\infty})_{{q}_{f}}\leq (00(1)^{\infty})_{{q}_{f}},$$ however  $x_{k+1}>\frac{1}{{q}_{f}}$ and $\sum_{i=3}^{\infty}\frac{1}{{q}^{i}}<\frac{1}{q}$ for all ${q}>\frac{1+\sqrt{5}}{2}$, therefore $\epsilon_{2}=1$. If $\epsilon_{3}=0$ then we would require
\begin{equation}
\label{one}
x_{k+1}=(1(0000)^{k}0(10)^{\infty})_{{q}_{f}}\leq (010(1)^{\infty})_{{q}_{f}},
\end{equation}
which is equivalent to
$$
x_{k+1}=\frac{1}{{q}_{f}}+\frac{1}{{q}_{f}^{4k+3}} \sum_{i=0}^{\infty}\frac{1}{{q}_{f}^{2i}}\leq \frac{1}{{q}_{f}^{2}}+\frac{1}{{q}_{f}^{4}} \sum_{i=0}^{\infty}\frac{1}{{q}_{f}^{i}},
$$
however
$$
\frac{1}{{q}_{f}}=\frac{1}{{q}_{f}^{2}}+\frac{1}{{q}_{f}^{4}}
\sum_{i=0}^{\infty}\frac{1}{{q}_{f}^{i}},
$$
 whence (\ref{one}) cannot occur and $\epsilon_{3}=1$. Now let us suppose $\epsilon_{4}=1,$ then we must have
\begin{equation}
\label{two}
x_{k+1}=(1(0000)^{k}0(10)^{\infty})_{{q}_{f}}\geq (01110^{\infty})_{{q}_{f}},
\end{equation} which is equivalent to
\begin{equation}
\label{three}
x_{k+1}=\frac{1}{{q}_{f}}+\frac{1}{{q}_{f}^{4k+3}} \sum_{i=0}^{\infty}\frac{1}{{q}_{f}^{2i}}\geq \frac{1}{{q}_{f}^{2}}+\frac{1}{{q}_{f}^{3}}+\frac{1}{{q}_{f}^{4}}.
\end{equation}
The left hand side of (\ref{three}) is maximised when $k=1$, therefore to show that $\epsilon_{4}=0$ it suffices to show that
\begin{equation}
\label{four}
\frac{1}{{q}_{f}}+\frac{1}{{q}_{f}^{7}} \sum_{i=0}^{\infty}\frac{1}{{q}_{f}^{2i}}\geq \frac{1}{{q}_{f}^{2}}+\frac{1}{{q}_{f}^{3}}+\frac{1}{{q}_{f}^{4}}
\end{equation} does not hold. By a simple manipulation (\ref{four}) is equivalent to
\begin{equation}
\label{five}
{q}_{f}^{6}-{q}_{f}^{5}-2{q}_{f}^{4}+{q}_{f}^{2}+{q}_{f}+1\geq 0,
\end{equation}
but by an explicit calculation we can show that the left hand side of (\ref{five}) is strictly negative, therefore (\ref{two}) does not hold and $\epsilon_{4}=0$.

Now we consider $x_{\aleph_{0}}$, replicating our analysis for $x_{k}$ we can show that if $(\epsilon_{i})_{i=1}^{\infty}$ is a ${q}$-expansion for $x_{\aleph_{0}}$ and $\epsilon_{1}=0$ then $\epsilon_{2}=1$. Unlike our previous case it is possible for $\epsilon_{3}=0$, however in this case $\epsilon_{i}=1$ for all $i\geq 4$. If $\epsilon_{3}=1$, then as in our previous case we must have $\epsilon_{4}=0$. We observe that
$$
x_{\aleph_{0}}= (10^{\infty})_{{q}_{f}} = (010(1)^{\infty})_{{q}_{f}}= (011010^{\infty})_{{q}_{f}}.
$$
Clearly, there exists a unique ${q}$-expansion for $x_{\aleph_{0}}$ satisfying $\epsilon_{1}=1$ and a unique ${q}$-expansion for $x_{\aleph_{0}}$ satisfying $\epsilon_{1}=0$, $\epsilon_{2}=1$ and $\epsilon_{3}=0$. Therefore all other ${q}$-expansions of $x_{\aleph_{0}}$ have $(0110)$ as a prefix, repeating the above argument arbitrarily many times we can determine that all the ${q}_f$-expansions of $x_{\aleph_{0}}$ are of the form:
\begin{align*}
x_{\aleph_{0}}&=(10^{\infty})_{{q}_{f}}\\
&=(010(1)^{\infty})_{{q}_{f}}\\
&=(011010^{\infty})_{{q}_{f}}\\
&=(0110010(1)^{\infty})_{{q}_{f}}\\
&=(0110011010^{\infty})_{{q}_{f}}\\
&=(01100110010(1)^{\infty})_{{q}_{f}}\\
&=(01100110011010^{\infty})_{{q}_{f}},\\
& \vdots
\end{align*} which is clearly infinite countable.
\end{proof}

Thus, to prove Theorem~\ref{Main thm}, it suffices to show that $q_2\notin\B_3\cup\B_4$. This may look like a fairly innocuous exercise, but in reality it requires a substantial effort.

\section{Proof that ${q}_{2}\notin \B_{3}$}
\label{No 3 expansions}

By Lemma~\ref{Branching lemma dynamical} to show that ${q}_{2}\not\in \B_{k}$ for all $k\geq 3$ it suffices to show ${q}_{2}\notin \B_{3}$ and ${q}_{2}\notin \B_{4}$. To prove this we begin by characterising those $x\in S_{{q}_{2}}$ that satisfy $\#\Sigma_{{q}_{2}}(x)=2$. To simplify our notation, we denote for the rest of the paper $\beta:=q_2$ and $T_i:=T_{q_2,i}$ for $i=0,1$.

\begin{prop}
\label{two expansions corollary}
The only $x\in S_{{\beta}}$ that satisfy $\#\Sigma_{{\beta}}(x)=2$ are
$$
x=(01(10)^{\infty})_{{\beta}}=(10000(10)^{\infty})_{{\beta}} \textrm{ and }x=(0111(10)^{\infty})_{{\beta}}=(100(10)^{\infty})_{{\beta}}.
$$
\end{prop}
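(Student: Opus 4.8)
The plan is to reduce the problem, via the branching structure, to matching two unique expansions. Since $x\in S_{\beta}$ satisfies $\#\Sigma_{\beta}(x)=2>1$, the minimal sequence of transformations carrying $x$ into $S_{\beta}$ is the identity, so Lemma~\ref{Branching lemma dynamical} (applied with the empty word) yields $\#\Omega_{\beta}(T_{0}(x))+\#\Omega_{\beta}(T_{1}(x))=2$ with both summands at least $1$; hence each equals $1$. Thus $T_{0}(x)=\beta x$ and $T_{1}(x)=\beta x-1$ each have a unique $\beta$-expansion, say $u$ and $v$ respectively, and the two $\beta$-expansions of $x$ are exactly $0u$ and $1v$. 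Because $\frac{1+\sqrt{5}}{2}<\beta=q_{2}<q_{f}$, Lemma~\ref{Unique expansions lemma} applies and forces $u,v\in\{0^{k}(10)^{\infty},1^{k}(10)^{\infty},0^{\infty},1^{\infty}\}$.

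Next I would convert the identity $x=(0u)_{\beta}=(1v)_{\beta}$ into a relation between real values: writing $(w)_{\beta}$ for the number represented by a sequence $w$, it reads $(u)_{\beta}=1+(v)_{\beta}$. The membership $x\in S_{\beta}$ supplies two constraints, namely $(u)_{\beta}=\beta x\ge 1$ (from $x\ge 1/\beta$) and $(v)_{\beta}=\beta x-1\le \frac{2-\beta}{\beta-1}$ (from $x\le\frac{1}{\beta(\beta-1)}$; this is in fact automatic, since $(u)_{\beta}\le\frac{1}{\beta-1}$). The lower bound $(u)_{\beta}\ge 1$ discards $u=0^{\infty}$ and all $u=0^{k}(10)^{\infty}$, whose values are at most $((10)^{\infty})_{\beta}<1$, leaving $u=1^{\infty}$ or $u=1^{k}(10)^{\infty}$ with $k\ge 1$; the upper bound discards $v=1^{\infty}$ and every $v=1^{k}(10)^{\infty}$ (including $(10)^{\infty}$), leaving $v=0^{\infty}$ or $v=0^{m}(10)^{\infty}$ with $m\ge 0$.

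It then remains to solve the surviving cases. Using the closed forms $(1^{k}(10)^{\infty})_{\beta}=\frac{1}{\beta-1}-\frac{\beta^{-k}}{\beta^{2}-1}$ and $(0^{m}(10)^{\infty})_{\beta}=\frac{\beta^{1-m}}{\beta^{2}-1}$, the equation $(u)_{\beta}=1+(v)_{\beta}$ becomes, after clearing denominators, $\beta^{-k}+\beta^{1-m}=(2-\beta)(\beta+1)=-\beta^{2}+\beta+2$; and a one-line computation from the minimal polynomial $\beta^{4}=2\beta^{2}+\beta+1$ shows $-\beta^{2}+\beta+2=\beta^{-1}+\beta^{-3}$. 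Since both left-hand terms are positive and their sum is $<1$, one needs $m\ge 2$; setting $j=m-1\ge 1$ the equation reads $\beta^{-k}+\beta^{-j}=\beta^{-1}+\beta^{-3}$ with $k,j\ge 1$, and assuming $k\le j$ forces $\beta^{-k}\ge\tfrac12(\beta^{-1}+\beta^{-3})$, hence $k=1$ and then $\beta^{-j}=\beta^{-3}$, i.e. $m=4$; the symmetric case gives $k=3$, $m=2$. These two pairs are exactly the stated solutions $x=(01(10)^{\infty})_{\beta}=(10000(10)^{\infty})_{\beta}$ and $x=(0111(10)^{\infty})_{\beta}=(100(10)^{\infty})_{\beta}$, while the degenerate possibilities $v=0^{\infty}$ (which would need $(u)_{\beta}=1$) and $u=1^{\infty}$ (which would need $(v)_{\beta}=\frac{2-\beta}{\beta-1}$) are excluded because neither target value is attained by any admissible sequence. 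Conversely, each of the two $x$ clearly lies in $S_{\beta}$ and, since its two children have the listed unique expansions, has exactly two $\beta$-expansions by the same branching identity. I expect the main obstacle to be the bookkeeping that handles the two infinite families $1^{k}(10)^{\infty}$ and $0^{m}(10)^{\infty}$ uniformly; once everything is reduced to the single exponential equation above, the monotonicity of $\beta^{-k}$ makes the final count routine.
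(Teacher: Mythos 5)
Your proposal is correct, but it takes a genuinely different route from the paper. The paper's proof is essentially a two-line citation: it invokes the proof of \cite[Proposition~2.4]{Sid1}, which already established that for $\frac{1+\sqrt5}2<q<q_f$ the only pairs $(y,y+1)$ with both coordinates having unique $q$-expansions occur at $q=\beta$ and are $y=(0000(10)^\infty)_\beta$, $y+1=(1(10)^\infty)_\beta$ or $y=(00(10)^\infty)_\beta$, $y+1=(111(10)^\infty)_\beta$; it then lifts these back to $S_\beta$ via Lemma~\ref{Branching lemma dynamical}. You share the first reduction (applying Lemma~\ref{Branching lemma dynamical} with the empty word to conclude that $\beta x$ and $\beta x-1$ each have a unique expansion), but instead of citing \cite{Sid1} you re-derive the classification of such pairs for the fixed base $\beta$: you feed Lemma~\ref{Unique expansions lemma} into the relation $(u)_\beta=1+(v)_\beta$, prune the candidate lists with the bounds coming from $x\in S_\beta$, and solve the resulting exponential equation $\beta^{-k}+\beta^{-j}=\beta^{-1}+\beta^{-3}$ using the minimal polynomial $\beta^4=2\beta^2+\beta+1$ and a monotonicity argument. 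Your algebra checks out (including the key identity $-\beta^2+\beta+2=\beta^{-1}+\beta^{-3}$, and the exclusion of the degenerate cases $v=0^\infty$, $u=1^\infty$, which reduce to the observation that $\beta^{-1}+\beta^{-3}$ lies strictly between consecutive integer powers of $\beta$). What the paper's citation buys is brevity and, implicitly, the stronger fact that $q=\beta$ is the \emph{only} base in $(\frac{1+\sqrt5}2,q_f)$ admitting such pairs; what your version buys is self-containedness, effectively inlining at $q=\beta$ the argument that \cite{Sid1} carries out for the whole range. One cosmetic slip: after pruning, the surviving family for $v$ should be $0^m(10)^\infty$ with $m\ge1$ (the $m=0$ member $(10)^\infty$ was already discarded), but this is harmless since your positivity argument immediately forces $m\ge2$ anyway.
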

\begin{proof}It was shown in the proof of \cite[Proposition~2.4]{Sid1} that if $\frac{1+\sqrt5}2<q<q_f$ and $y, y+1$ have unique $q$-expansions, then necessarily $q=\beta$ and either $y=(0000(10)^\infty)_{\beta}$ and $y+1=(1(10)^\infty)_{\beta}$ or $y=(00(10)^\infty)_{\beta}$ and $y+1=(111(10)^\infty)_{\beta}$ respectively. Since for either case there exists a unique $x\in S_{\beta}$ such that $\beta x-1=y$, Lemma~\ref{Branching lemma dynamical} yields the claim.
\end{proof}

In what follows we shall let $(\epsilon^{1}_{i})_{i=1}^{\infty}=01(10)^{\infty}$, $(\epsilon_{i}^{2})_{i=1}^{\infty}=10000(10)^{\infty}$, $(\epsilon_{i}^{3})_{i=1}^{\infty}=0111(10)^{\infty}$ and $(\epsilon _{i}^{4})_{i=1}^{\infty}=100(10)^{\infty}$.

\begin{remark}Let $(\bar{\epsilon}_{i})_{i=1}^{\infty}=(1-\epsilon_{i})_{i=1}^{\infty}$, we refer to $(\bar{\epsilon}_{i})_{i=1}^{\infty}$ as the \textit{reflection} of $(\epsilon_{i})_{i=1}^{\infty}$. Clearly $(\bar{\epsilon}^{1}_{i})_{i=1}^{\infty}=(\epsilon_{i}^{4})_{i=1}^{\infty}$ and $(\bar{\epsilon}^{2}_{i})_{i=1}^{\infty}=(\epsilon_{i}^{3})_{i=1}^{\infty}$, this is to be expected as every $x\in I_{q}$ satisfies $\#\Sigma_{q}(x)=\#\Sigma_{q}(\frac{1}{{q}-1}-x)$ and mapping $(\epsilon_{i})_{i=1}^{\infty}$ to $(\bar{\epsilon}_{i})_{i=1}^{\infty}$ is a bijection between $\Sigma_{q}(x)$ and $\Sigma_{q}(\frac{1}{{q}-1}-x)$. If $(\epsilon^{1}_{i})_{i=1}^{\infty}$ and $(\epsilon^{2}_{i})_{i=1}^{\infty}$ were not the reflections of $(\epsilon^{4}_{i})_{i=1}^{\infty}$ and $(\epsilon^{3}_{i})_{i=1}^{\infty}$ respectively then there would exist other $x\in S_{{\beta}}$ satisfying $\#\Sigma_{{\beta}}(x)=2$, contradicting Proposition~\ref{two expansions corollary}.
\end{remark}

In this section we show that no $x\in I_{{\beta}}$ can satisfy $\#\Sigma_{{\beta}}(x)=3$. To show that ${\beta}\notin \B_{3}$ and ${\beta}\notin \B_{4}$ we will make use of the following proposition.

\begin{prop}
\label{unique transformation prop}
Suppose $x\in I_{{\beta}}$ satisfies $\#\Sigma_{{\beta}}(x)=2$ or equivalently $\#\Omega_{{\beta}}(x)=2$, then there exists a unique sequence of transformations $a$ such that $a(x)\in S_{{\beta}}$. Moreover, $a(x)=((\epsilon^{1}_{i})_{i=1}^{\infty})_{{\beta}}$ or $a(x)=((\epsilon^{3}_{i})_{i=1}^{\infty})_{{\beta}}$.
\end{prop}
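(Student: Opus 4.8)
The plan is to separate the two assertions: the identification of $a(x)$ will follow almost immediately from Proposition~\ref{two expansions corollary}, while the genuine content is the \emph{uniqueness} of the sequence $a$.

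First I would invoke the remark preceding Lemma~\ref{Branching lemma dynamical}: since $\#\Omega_{\beta}(x)=2>1$, there is a unique \emph{minimal} sequence of transformations $a$ with $a(x)\in S_{\beta}$. The key preliminary observation is that outside $S_{\beta}$ the admissible transformation is forced --- for $y\in I_{\beta}\setminus S_{\beta}$ exactly one of $T_{0}(y),T_{1}(y)$ lies in $I_{\beta}$ (either $y<\frac{1}{\beta}$, forcing $T_{0}$, or $y>\frac{1}{\beta(\beta-1)}$, forcing $T_{1}$). Hence the orbit from $x$ to $a(x)$ is deterministic and $\#\Omega_{\beta}(a(x))=\#\Omega_{\beta}(x)=2$. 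Since $a(x)\in S_{\beta}$ carries exactly two expansions, Proposition~\ref{two expansions corollary} forces $a(x)=((\epsilon^{1}_{i})_{i=1}^{\infty})_{\beta}$ or $a(x)=((\epsilon^{3}_{i})_{i=1}^{\infty})_{\beta}$, which is the \emph{moreover} clause.

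For uniqueness I would apply Lemma~\ref{Branching lemma dynamical} at $a(x)$: it gives $\#\Omega_{\beta}(T_{0}(a(x)))+\#\Omega_{\beta}(T_{1}(a(x)))=2$ with each summand in $[1,2)$, so \emph{both} children have a unique $\beta$-expansion. The decisive auxiliary fact I would isolate is that a point $z\in I_{\beta}$ with $\#\Omega_{\beta}(z)=1$ never re-enters $S_{\beta}$: for any sequence $c$ with $c(z)\in I_{\beta}$ the inequality $(\ref{Cardinality inequality})$ gives $1\le\#\Omega_{\beta}(c(z))\le\#\Omega_{\beta}(z)=1$, whereas every point $w\in S_{\beta}$ satisfies $\#\Omega_{\beta}(w)\ge\#\Omega_{\beta}(T_{0}(w))+\#\Omega_{\beta}(T_{1}(w))\ge 2$; hence $c(z)\notin S_{\beta}$. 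One also needs the easy monotonicity remark that once an iterate leaves $I_{\beta}$ it cannot return (if $z'>\frac{1}{\beta-1}$ then $T_{0}(z'),T_{1}(z')>\frac{1}{\beta-1}$, and if $z'<0$ both images stay negative), so any $b$ with $b(x)\in S_{\beta}$ must keep all its iterates in $I_{\beta}$ and therefore prolongs the forced orbit.

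Finally I would assemble these pieces: any $b$ with $b(x)\in S_{\beta}$ must, by forcedness, coincide with $a$ up to the first visit to $S_{\beta}$, namely $a(x)$; if $b$ were longer it would continue through $T_{0}(a(x))$ or $T_{1}(a(x))$, both unique-expansion points, and by the isolated fact could never reach $S_{\beta}$ again --- a contradiction. Hence $b=a$. I expect the main obstacle to be precisely this last step: making rigorous that \emph{every} prolongation past the single branching point stays away from $S_{\beta}$, which is exactly where the $1+1=2$ splitting of Lemma~\ref{Branching lemma dynamical} and the ``unique expansions stay unique'' principle do the work.
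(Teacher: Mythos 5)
Your proof is correct, and although it draws on the same ingredients as the paper --- Lemma~\ref{Branching lemma dynamical}, inequality~(\ref{Cardinality inequality}) and Proposition~\ref{two expansions corollary} --- the uniqueness argument is organized along genuinely different lines. The paper argues by contradiction via a case analysis on how a hypothetical second sequence $a'$ relates to $a$: if one is a prefix of the other, the branching identity at the earlier visit to $S_{\beta}$ forces $\#\Omega_{\beta}(x)\geq 3$, while if the two sequences diverge, the divergence point must itself lie in $S_{\beta}$ (both children staying in $I_{\beta}$) and both branches lead back into $S_{\beta}$, forcing $\#\Omega_{\beta}(x)\geq 4$; either way this contradicts $\#\Omega_{\beta}(x)=2$. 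You replace this case analysis by two structural facts: (i) outside $S_{\beta}$ exactly one map preserves $I_{\beta}$, so every sequence landing in $S_{\beta}$ must begin with the minimal sequence $a$ --- which also hands you $\#\Omega_{\beta}(a(x))=2$, so the ``moreover'' clause follows from Proposition~\ref{two expansions corollary} directly, where the paper routes through the $1{+}1$ splitting of Lemma~\ref{Branching lemma dynamical} first; and (ii) a point with a unique expansion can never be mapped into $S_{\beta}$, since otherwise (\ref{Cardinality inequality}) together with the branching identity on $S_{\beta}$ would give $1\geq 2$. Your decomposition is arguably cleaner and more reusable: fact (ii) and the forced-orbit observation are exactly what the paper exploits implicitly in Remark~\ref{Proof remark} and in deriving Proposition~\ref{case prop}, whereas the paper's counting argument has the advantage of being purely local, needing no statement about the global orbit structure. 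The one step you should write out in full is the equality $\#\Omega_{\beta}(a(x))=\#\Omega_{\beta}(x)$: it requires noting that every element of $\Omega_{\beta}(x)$ keeps all its iterates in $I_{\beta}$ by definition and therefore must follow the forced maps until the first entry into $S_{\beta}$; this is precisely your forcedness remark applied to arbitrary elements of $\Omega_{\beta}(x)$ rather than to a single orbit, so the gap is expository rather than mathematical.
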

\begin{proof}
As $\#\Omega_{{\beta}}(x)=2$ then there must exist $a$ satisfying $a(x)\in S_{{\beta}}$, otherwise $\#\Omega_{{\beta}}(x)=1$. We begin by showing uniqueness, suppose $a'$ satisfies $a'(x)\in S_{{\beta}}$ and $a'\neq a$. If $|a'|<|a|$ then either $a'$ is a prefix of $a$ in which case by (\ref{Cardinality inequality}) and Lemma~\ref{Branching lemma dynamical} we have that
$$
\#\Omega_{{\beta}}(x)\geq
\#\Omega_{{\beta}}(a'(x)) = \#\Omega_{{\beta}}(T_{0}(a'(x)))+
\#\Omega_{{\beta}}(T_{1}(a'(x)))\geq 3,
$$
which contradicts $\#\Omega_{{\beta}}(x)=2$. Alternatively if $a'$ is not a prefix of $a$ then there exists $b\in\bigcup_{n=0}^{\infty}\{T_{0},T_{1}\}^{n}$ such that $b(x)\in S_{{\beta}}$ and either $b0$ is a prefix for $a'$ and $b1$ is a prefix for $a$, or $b0$ is a prefix for $a$ and $b1$ is a prefix for $a'$. In either case it follows from (\ref{Cardinality inequality}) and Lemma~\ref{Branching lemma dynamical} that
$$
\#\Omega_{\beta}(x)\geq \#\Omega_{\beta}(b(x)) = \#\Omega_{\beta}(T_{0}(b(x)))+ \#\Omega_{\beta}(T_{1}(b(x)))\geq 4,
$$
a contradiction. By analogous arguments we can show that if $|a'|=|a|$ or $|a'|>|a|$ then this implies $\#\Omega_{\beta}(x)>2$, therefore $a$ must be unique.

Now let $a$ be the unique sequence of transformations such that $a(x)\in S_{{\beta}}$. By Lemma~\ref{Branching lemma dynamical},
$$
\#\Omega_{\beta}(T_{0}(a(x)))=\#\Omega_{\beta}(T_{1}(a(x)))=1.
$$
But it follows from Proposition~\ref{two expansions corollary} that this can only happen when $a(x)=((\epsilon^{1}_{i})_{i=1}^{\infty})_{{\beta}}$ or $a(x)=((\epsilon^{3}_{i})_{i=1}^{\infty})_{{\beta}}$.
\end{proof}

\begin{remark}
\label{Proof remark}
By Proposition~\ref{unique transformation prop}, to show that $x\in I_{{\beta}}$ satisfies $\mathrm{card}\,\Sigma_{{\beta}}(x)>2$ (or equivalently, $\mathrm{card}\,\Omega_{{\beta}}(x)>2$), it suffices to construct a sequence of transformations $a$ such that $a(x)\in S_{{\beta}}$ with $a(x)\neq ((\epsilon^{1}_{i})_{i=1}^{\infty})_{{\beta}}$ and $a(x)\neq ((\epsilon^{3}_{i})_{i=1}^{\infty})_{{\beta}}$. We will make regular use of this strategy in our later proofs.
\end{remark}

Before proving ${\beta}\notin \B_{3}$ it is appropriate to state numerical estimates\footnote{The explicit calculations performed in this paper were done using MATLAB. In our calculations we approximated ${\beta}$ by $1.710644095045033$, which is correct to the first fifteen decimal places.} for $S_{{\beta}}$, $((\epsilon^{1}_{i})_{i=1}^{\infty})_{{\beta}}$ and $((\epsilon^{3}_{i})_{i=1}^{\infty})_{{\beta}}$. Our calculations yield
$$
S_{\beta}=[0.584575\ldots, 0.822599\ldots],
$$
$$
((\epsilon^{1}_{i})_{i=1}^{\infty})_{{\beta}}= 0.645198\ldots \textrm{ and } ((\epsilon^{3}_{i})_{i=1}^{\infty})_{{\beta}}=0.761976\ldots.
$$ These estimates will make clear when $a(x)\in S_{{\beta}}$ and whether $a(x)=((\epsilon^{1}_{i})_{i=1}^{\infty})_{{\beta}}$ or $a(x)=((\epsilon^{3}_{i})_{i=1}^{\infty})_{{\beta}}$.

\begin{thm}
\label{Three expansions thm}
We have ${\beta}\notin \B_{3}$.
\end{thm}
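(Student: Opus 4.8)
The plan is to argue by contradiction. Suppose some $x\in I_{\beta}$ has $\#\Omega_{\beta}(x)=3$. Applying the unique minimal sequence of transformations carrying $x$ into $S_{\beta}$ does not change the number of expansions, since every intermediate transformation is forced; so I may replace $x$ by a point $z\in S_{\beta}$ with $\#\Omega_{\beta}(z)=3$. By Lemma~\ref{Branching lemma dynamical}, $\#\Omega_{\beta}(T_{0}(z))+\#\Omega_{\beta}(T_{1}(z))=3$ with both summands in $\{1,2\}$, so exactly one branch carries a unique expansion and the other carries two. Since $S_{\beta}$ is symmetric about its midpoint $\frac{1}{2(\beta-1)}$ and the reflection $y\mapsto\frac{1}{\beta-1}-y$ interchanges the two branches while preserving the number of expansions, I may assume throughout that $\#\Omega_{\beta}(T_{0}(z))=1$ and $\#\Omega_{\beta}(T_{1}(z))=2$.

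Next I would pin down the candidates for $z$. The branch $T_{0}(z)=\beta z$ lies in $[1,\frac{1}{\beta-1}]$ and has a unique expansion, so by Lemma~\ref{Unique expansions lemma} together with the value range the only admissible possibilities are $\beta z=(1^{\infty})_{\beta}$ or $\beta z=(1^{k}(10)^{\infty})_{\beta}$ with $k\geq 1$; equivalently $z=(01^{\infty})_{\beta}$ or $z=(01^{k}(10)^{\infty})_{\beta}$. Two of these are excluded at once: for $k=1$ one has $z=(01(10)^{\infty})_{\beta}=((\epsilon^{1}_{i})_{i=1}^{\infty})_{\beta}$, and for $k=3$ one has $z=(0111(10)^{\infty})_{\beta}=((\epsilon^{3}_{i})_{i=1}^{\infty})_{\beta}$, and Proposition~\ref{two expansions corollary} shows each of these points has exactly two expansions, contradicting $\#\Omega_{\beta}(z)=3$.

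For every remaining candidate it then remains to contradict $\#\Omega_{\beta}(T_{1}(z))=2$. Here I would invoke Remark~\ref{Proof remark}: by Propositions~\ref{two expansions corollary} and~\ref{unique transformation prop} the only points of $S_{\beta}$ admitting exactly two expansions are $((\epsilon^{1}_{i})_{i=1}^{\infty})_{\beta}\approx 0.645$ and $((\epsilon^{3}_{i})_{i=1}^{\infty})_{\beta}\approx 0.762$, so it suffices to carry $T_{1}(z)=\beta z-1$ into $S_{\beta}$ at a point equal to neither. Each remaining value of $T_{1}(z)$ lies in $(0,\frac{2-\beta}{\beta-1}]\subset(0,\min S_{\beta})$, forcing repeated application of $T_{0}$; a short computation shows that after one or two applications of $T_{0}$ the orbit enters $S_{\beta}$ at an interior value distinct from both endpoints above (for instance $z=(01^{\infty})_{\beta}$ gives $T_{0}(T_{1}(z))\approx 0.696$, and $z=(011(10)^{\infty})_{\beta}$ gives $T_{0}^{2}(T_{1}(z))\approx 0.676$). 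This forces $\#\Omega_{\beta}(T_{1}(z))\geq 3$, the desired contradiction.

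The main obstacle I anticipate is the completeness and uniformity of this last step: the candidates form an infinite family indexed by $k$, so one must verify that for \emph{every} $k\geq 2$ (and in the limit $k\to\infty$, where $T_{1}(z)\to\frac{2-\beta}{\beta-1}$) the forced $T_{0}$-orbit lands in $S_{\beta}$ at a genuinely interior point, taking care with the two boundary values $\approx 0.645,0.762$ and with the risk of a landing point sitting exactly on the edge of $S_{\beta}$. Making these numerical bounds rigorous from the stated approximation of $\beta$, and confirming that no candidate orbit instead reaches a unique-expansion point (which would give the equally impossible $\#\Omega_{\beta}(T_{1}(z))=1$), is where the care lies; by comparison the reflection reduction and the branching bookkeeping are routine.
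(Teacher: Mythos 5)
Up to the reflection $y\mapsto\frac{1}{\beta-1}-y$ --- you normalise so that the unique-expansion branch is $T_0(z)\in[1,\frac{1}{\beta-1}]$, whereas the paper takes it to be $T_1(a(x'))=(0^{k}(01)^{\infty})_{\beta}$ near $0$ --- your argument is exactly the paper's proof: branching, classification of the unique branch via Lemma~\ref{Unique expansions lemma}, exclusion of the cases corresponding to $((\epsilon^{1}_{i})_{i=1}^{\infty})_{\beta}$ and $((\epsilon^{3}_{i})_{i=1}^{\infty})_{\beta}$ via Proposition~\ref{two expansions corollary}, and then forcing the two-expansion branch into $S_{\beta}$ at a point distinct from both exceptional points, contradicting Proposition~\ref{unique transformation prop}. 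The uniformity over $k$ that you flag as the main obstacle is closed in the paper precisely as you anticipate: finitely many explicit orbit computations (Table~\ref{table2}) together with the observation that your $T_1(z)$ increases monotonically to its limit $\frac{2-\beta}{\beta-1}$, so that for all large $k$ a single forced application of $T_0$ lands in one fixed subinterval of $S_{\beta}$ (in the paper's reflected coordinates, $(0.710644\ldots,0.746082\ldots)$) containing neither exceptional point.
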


\begin{proof}
Suppose $x'\in I_{{\beta}}$ satisfies $\#\Sigma_{{\beta}}(x')=3$ or equivalently $\#\Omega_{{\beta}}(x')=3$. Let $a$ denote the unique minimal sequence of transformations such that $a(x')\in S_{{\beta}}$. By considering reflections we may assume without loss in generality that
$$
\#\Omega_{{\beta}}(T_{1}(a(x')))=1 \textrm{ and } \#\Omega_{{\beta}}(T_{0}(a(x')))=2.
$$
Put $x=T_{1}(a(x'))$; by a simple argument it can be shown that $x\neq 0$, so we may assume that $x=(0^k(01)^{\infty})_{{\beta}}$ for some $k\geq 1$. To show that ${\beta}\notin \B_{3}$ we consider $T_{0}(a(x'))=x+1=(0^k(01)^{\infty})_{{\beta}}+1$, we will show that for each $k\geq 1$ there exists a finite sequence of transformations $a$ such that $a(x+1)\in S_{{\beta}}$, $a(x+1)\neq ((\epsilon_{i}^{1})_{i=1}^{\infty})_{{\beta}}$ and $a(x+1)\neq((\epsilon_{i}^{3})_{i=1}^{\infty})_{{\beta}}$. By Proposition~\ref{unique transformation prop} and Remark~\ref{Proof remark} this implies $\#\Omega_{{\beta}}(x+1)>2$, which is a contradiction and therefore ${\beta}\notin \B_{3}$.

Table~\ref{table2} states the orbits of $(0^k(01)^{\infty})_{{\beta}}+1$ under $T_{0}$ and $T_{1}$ until eventually $(0^k(01)^{\infty})_{{\beta}}+1$ is mapped into $S_{{\beta}}$. Table~\ref{table2} also includes the orbit of $1$ under $T_{0}$ and $T_{1}$ until $1$ is mapped into $S_{{\beta}}$. The reason we have included the orbit of $1$ is because $(0^k(01)^{\infty})_{{\beta}}+1\to 1$ as $k\to \infty$, therefore understanding the orbit of $1$ allows us to understand the orbit of $(0^k(01)^{\infty})_{{\beta}}+1$ for large values of $k$.
\begin{table}
\centering
\begin{tabular}{|c|c|}
\hline
$(0^k(01)^{\infty})_{{\beta}}+1$ & Iterates of $(0^k(01)^{\infty})_{{\beta}}+1$ (To $6$ decimal places)\\
\hline
$(0(01)^{\infty})_{{\beta}}+1$ & Unique ${q}$-expansion by Proposition~\ref{two expansions corollary}\\
$(00(01)^{\infty})_{{\beta}}+1$ & $1.177400, 1.014114, 0.734788$\\
$(000(01)^{\infty})_{{\beta}}+1$ & Unique ${q}$-expansion by Proposition~\ref{two expansions corollary}\\
$(0000(01)^{\infty})_{{\beta}}+1$ & $1.060622, 0.8143482$\\
$(00000(01)^{\infty})_{{\beta}}+1$ & $1.035438, 0.771266$\\
$(000000(01)^{\infty})_{{\beta}}+1$ & $1.020716, 0.746082$\\
$1$ & $1, 0.710644$\\
\hline
\end{tabular}
\medskip
\caption{Successive iterates of $(0^k(01)^{\infty})_{{\beta}}+1$ falling into $S_\beta\setminus\{(\e^1)_\beta, (\e^3)_\beta\}$}
\label{table2}
\end{table}

By inspection of Table~\ref{table2}, we conclude that for $1\leq k\leq 6$ either $(0^k(01)^{\infty})_{{\beta}}+1$ has a unique ${q}$-expansion which contradicts $\#\Omega_{{\beta}}(T_{0}(a(x')))=2$, or there exists $a$ such that $a((0^k(01)^{\infty})_{{\beta}}+1)\in S_{{\beta}}$ with $a((0^k(01)^{\infty})_{{\beta}}+1)\neq ((\epsilon^{1}_{i})_{i=1}^{\infty})_{{\beta}}$ and $a((0^k(01)^{\infty})_{{\beta}}+1)\neq ((\epsilon^{3}_{i})_{i=1}^{\infty})_{{\beta}}$, which contradicts $\#\Omega_{{\beta}}(x+1)=2$ by Proposition~\ref{unique transformation prop}. To conclude our proof, it suffices to show that for each $k\geq 7$ there exists $a$ such that $a((0^k(01)^{\infty})_{{\beta}}+1)\in S_{{\beta}}$, $a((0^k(01)^{\infty})_{{\beta}}+1)\neq ((\epsilon^{1}_{i})_{i=1}^{\infty})_{{\beta}}$ and $a((0^k(01)^{\infty})_{{\beta}}+1)\neq ((\epsilon^{3}_{i})_{i=1}^{\infty})_{{\beta}}$. For all $k\geq 7$ $(0^k(01)^{\infty})_{{\beta}}+1\in (1,(000000(01)^{\infty})_{{\beta}}+1)$, but by inspection of Table \ref{table2} it is clear that $T_{1}(x)\in (0.710644\ldots,0.746082\ldots)$ for all $x \in (1,(000000(01)^{\infty})_{{\beta}}+1)$.  Therefore we can infer that such an $a$ exists for all $k\geq 7$, which concludes our proof.
\end{proof}

\section{Proof that ${q_2}\notin \B_{4}$}

To prove ${\beta}\notin \B_{4}$ we will use a similar method to that used in the previous section, the primary difference being there are more cases to consider. Before giving our proof we give details of these cases.

Suppose $x'\in I_{\beta}$ satisfies $\#\Sigma_{\beta}(x')=4$ or equivalently $\#\Omega_{\beta}(x')=4$. Let $a'$ denote the unique minimal sequence of transformations such that $a'(x)\in S_{\beta}$. By Lemma~\ref{Branching lemma dynamical},
$$
\#\Omega_{\beta}(T_{0}(a'(x')))+ \#\Omega_{\beta}(T_{1}(a'(x')))=4.
$$
By Theorem~\ref{Three expansions thm}, $\#\Omega_{\beta}(T_{0}(a'(x')))\neq 3$ and $\#\Omega_{\beta}(T_{1}(a'(x')))\neq 3$, whence
\begin{equation}
\label{one ref}
\#\Omega_{\beta}(T_{0}(a'(x')))=
\#\Omega_{\beta}(T_{1}(a'(x')))=2.
\end{equation}
Letting $x=T_{1}(a'(x'))$, we observe that (\ref{one ref}) is equivalent to
\begin{equation}
\label{eq:sum2}
\#\Omega_{\beta}(x)=\#\Omega_{\beta}(x+1)=2.
\end{equation}

By Proposition~\ref{unique transformation prop}, there exists a unique sequence of transformations $a$ such that $a(x)\in S_{\beta}$ and $a(x)=((\epsilon^{1}_{i})_{i=1}^{\infty})_{\beta}$ or $a(x)=((\epsilon^{3}_{i})_{i=1}^{\infty})_{\beta}$. We now determine the possible unique sequences of transformations $a$ that satisfy $a(x)\in S_{\beta}$.

To determine the unique $a$ such that $a(x)\in S_{\beta}$, it is useful to consider the interval $[\frac{1}{\beta^{2}-1},\frac{\beta}{\beta^{2}-1}]$. The significance of this interval is that $T_{0}(\frac{1}{\beta^{2}-1})= \frac{\beta}{\beta^{2}-1}$ and $T_{1}(\frac{\beta}{\beta^{2}-1})=\frac{1}{\beta^{2}-1}$. The monotonicity of the maps $T_{0}$ and $T_{1}$ implies that if $x \in (0,\frac{1}{\beta-1})$ and $x\notin [\frac{1}{\beta^{2}-1},\frac{\beta}{\beta^{2}-1}]$, then there exists $i\in\{0,1\}$ and a minimal $k\geq 1$ such that $T_{i}^{k}(x)\in [\frac{1}{\beta^{2}-1},\frac{\beta}{\beta^{2}-1}]$. Furthermore, $S_{\beta}\subset [\frac{1}{\beta^{2}-1},\frac{\beta}{\beta^{2}-1}]$, in view of ${\beta}>\frac{1+\sqrt5}2$.

In particular, if $x\in (0,\frac{1}{\beta^{2}-1})$, then there exists a minimal $k\geq 1$ such that $T_{0}^{k}(x)\in(\frac{1}{\beta^{2}-1},\frac{\beta}{\beta^{2}-1});$ $T_{0}^{k}(x)$ cannot equal $\frac{1}{\beta^{2}-1}$ or $\frac{\beta}{\beta^{2}-1}$ as this would imply $\#\Omega_{\beta}(x)=1$. There are three cases to consider: either $T_{0}^{k}(x)\in S_{\beta}$, in which case $T_{0}^{k}(x)=((\epsilon^{1}_{i})_{i=1}^{\infty})_{\beta}$ or $T_{0}^{k}(x)=((\epsilon^{3}_{i})_{i=1}^{\infty})_{\beta}$ by Proposition~\ref{unique transformation prop}, or alternatively $T_{0}^{k}(x)\in (\frac{1}{\beta^{2}-1},\frac{1}{\beta})$ or $T_{0}^{k}(x)\in (\frac{1}{\beta(\beta-1)},\frac{\beta}{\beta^{2}-1})$. It is a simple exercise to show that if $T_{0}^{k}(x)=((\epsilon^{3}_{i})_{i=1}^{\infty})_{\beta}$  or $T_{0}^{k}(x)\in (\frac{1}{\beta(\beta-1)},\frac{\beta}{\beta^{2}-1})$ then $k\geq 2$. By Lemma~\ref{Bijection lemma} and Proposition~\ref{unique transformation prop}, if  $T_{0}^{k}(x)\in S_{\beta}$, then $$x=(0^{k}(\epsilon^{1}_{i})_{i=1}^{\infty})_{\beta} \textrm{ for some } k\geq 1 \textrm{ or } x=(0^{k}(\epsilon^{3}_{i})_{i=1}^{\infty})_{\beta} \textrm{ for some } k\geq 2.$$

For any ${q}\in (\frac{1+\sqrt{5}}{2},{q}_{f})$ and $y\in (\frac{1}{{q}^{2}-1},\frac{1}{q})$ there exists a unique minimal sequence $a''$ such that $a''(y)\in S_{q}$, moreover $a''(y)=(T_{{q},1}\circ T_{{q},0})^{j}(y)$ for some $j\geq 1$ and $(T_{{q},1}\circ T_{{q},0})^{i}(y)\in (\frac{1}{{q}^{2}-1},\frac{1}{q})$ for all $i<k$. For all $y\in (\frac{1}{{q}^{2}-1},\frac{1}{q})$ we have that $(T_{{q},1}\circ T_{{q},0})(y)=q^2y-1<{q} -1$; furthermore, it can be checked directly that $\beta -1 < ((\epsilon^{3}_{i})_{i=1}^{\infty})_{\beta}$, whence if $T_{0}^{k}(x)\in(\frac{1}{\beta^{2}-1},\frac{1}{\beta})$, then
$$
x=(0^{k}(01)^{j}(\epsilon^{1}_{i})_{i=1}^{\infty})_{\beta},
$$
for some $k\geq 1$ and $j\geq 1$. By a similar argument it can be shown that if $T_{0}^{k}(x)\in (\frac{1}{\beta(\beta-1)},\frac{\beta}{\beta^{2}-1})$, then $$x=(0^{k}(10)^{j}(\epsilon^{3}_{i})_{i=1}^{\infty})_{\beta},$$ for some $k\geq 2$ and $j\geq 1$. The above arguments are summarised in the following proposition.

\begin{prop}
\label{case prop}
Let $x$ be as in (\ref{eq:sum2}); then one of the following four cases holds:
\begin{align}
\label{Case 1}
x&=(0^{k}(\epsilon^{1}_{i})_{i=1}^{\infty})_{\beta} \textrm{ for some } k\geq 1,\\
\label{Case 2}
x&=(0^{k}(\epsilon^{3}_{i})_{i=1}^{\infty})_{\beta} \textrm{ for some } k\geq 2,\\
\label{Case 3}
x&=(0^{k}(01)^{j}(\epsilon^{1}_{i})_{i=1}^{\infty})_{\beta} \textrm{ for some } k\geq 1 \textrm{ and } j\geq 1
\end{align} or
\begin{equation}
\label{Case 4}
x=(0^{k}(10)^{j}(\epsilon^{3}_{i})_{i=1}^{\infty})_{\beta} \textrm{ for some } k\geq 2 \textrm{ and } j\geq 1.
\end{equation}
\end{prop}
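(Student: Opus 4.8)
The plan is to use the two halves of (\ref{eq:sum2}) for two different purposes: the condition on $x+1$ to localise $x$, and the condition on $x$ together with Proposition~\ref{unique transformation prop} to read off the admissible transformations. Throughout write $p_{1}=((\epsilon^{1}_{i})_{i=1}^{\infty})_{\beta}$ and $p_{3}=((\epsilon^{3}_{i})_{i=1}^{\infty})_{\beta}$ for the two switch-region points. First I would note that $\#\Omega_{\beta}(x+1)=2$ forces $x+1\in I_{\beta}$, whence $x\leq\frac{1}{\beta-1}-1=\frac{2-\beta}{\beta-1}$; since $\beta>\frac{1+\sqrt{5}}{2}$ one checks $\frac{2-\beta}{\beta-1}<\frac{1}{\beta^{2}-1}<\frac{1}{\beta}$. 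Thus $x$ lies strictly below $[\frac{1}{\beta^{2}-1},\frac{\beta}{\beta^{2}-1}]$ and below $S_{\beta}$, so $T_{1}(x)=\beta x-1<0$ and the only admissible map at $x$, and at every iterate still lying below $\frac{1}{\beta}$, is $T_{0}$. As $T_{0}$ is expanding and fixes $0$, there is a minimal $k\geq 1$ with $y:=T_{0}^{k}(x)\in[\frac{1}{\beta^{2}-1},\frac{\beta}{\beta^{2}-1}]$; the endpoints are excluded since they carry unique expansions by Lemma~\ref{Unique expansions lemma} and would force $\#\Omega_{\beta}(x)=1$, so $y\in(\frac{1}{\beta^{2}-1},\frac{\beta}{\beta^{2}-1})$.

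Next I would split on the location of $y$ among the three subintervals $(\frac{1}{\beta^{2}-1},\frac{1}{\beta})$, $S_{\beta}$ and $(\frac{1}{\beta(\beta-1)},\frac{\beta}{\beta^{2}-1})$. If $y\in S_{\beta}$, Proposition~\ref{unique transformation prop} gives $y=p_{1}$ or $y=p_{3}$, which are exactly forms (\ref{Case 1}) and (\ref{Case 2}); in the latter the bound $x\leq\frac{2-\beta}{\beta-1}$ excludes $k=1$, since $p_{3}/\beta>\frac{2-\beta}{\beta-1}$, forcing $k\geq 2$. If $y\in(\frac{1}{\beta^{2}-1},\frac{1}{\beta})$, then $y<\frac{1}{\beta}$ forces $T_{0}$, and since $T_{0}(y)=\beta y>\frac{1}{\beta(\beta-1)}$ a further $T_{0}$ would leave $I_{\beta}$, so $T_{1}$ is forced; hence the orbit is driven by the expanding map $g=T_{1}\circ T_{0}$, $g(y)=\beta^{2}y-1$, which fixes $\frac{1}{\beta^{2}-1}$ and keeps its iterates inside $(\frac{1}{\beta^{2}-1},\beta-1)$ until one of them first exceeds $\frac{1}{\beta}$ and so enters $S_{\beta}$. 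Every value produced is $<\beta-1<p_{3}$, so the first return to $S_{\beta}$ can only be $p_{1}$, and recording the $j$ applications of $T_{1}\circ T_{0}$ as the block $(01)^{j}$ gives form (\ref{Case 3}). Finally, the subinterval $(\frac{1}{\beta(\beta-1)},\frac{\beta}{\beta^{2}-1})$ is the image of $(\frac{1}{\beta^{2}-1},\frac{1}{\beta})$ under the reflection $R(z)=\frac{1}{\beta-1}-z$, which conjugates $T_{0}$ to $T_{1}$ and sends $p_{1}$ to $p_{3}$; applying the previous paragraph to $R(y)$ shows the orbit of $y$ runs through $(T_{0}\circ T_{1})^{j}$ and first returns to $S_{\beta}$ at $p_{3}$, giving form (\ref{Case 4}), with $k\geq 2$ again forced by $x\leq\frac{2-\beta}{\beta-1}$.

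I expect the main obstacle to be the side-interval analysis: justifying that once $y$ lands in $(\frac{1}{\beta^{2}-1},\frac{1}{\beta})$ the admissible dynamics are rigidly the cycle $(T_{1}\circ T_{0})^{j}$, and that the first return to $S_{\beta}$ occurs precisely at $p_{1}$. The three ingredients are: (a) $T_{1}$ is inadmissible below $\frac{1}{\beta}$ while a second $T_{0}$ is inadmissible above $\frac{1}{\beta(\beta-1)}$, which makes the transformation pattern forced; (b) $g$ is expanding with its fixed point at the left endpoint, so its iterates strictly increase and cannot remain below $\frac{1}{\beta}$ indefinitely, giving termination---already guaranteed anyway by the finiteness of the minimal sequence in Proposition~\ref{unique transformation prop}; and (c) the clean inequality $\beta-1<p_{3}$ rules out the wrong switch point. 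The numerical comparisons needed ($\frac{2-\beta}{\beta-1}<\frac{1}{\beta^{2}-1}$, $p_{3}/\beta>\frac{2-\beta}{\beta-1}$, $\frac{\beta}{\beta^{2}-1}>\frac{1}{\beta(\beta-1)}$, $\beta-1<p_{3}$) are routine from the estimates for $S_{\beta}$, $p_{1}$ and $p_{3}$ already recorded.
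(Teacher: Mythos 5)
Your proof is correct and takes essentially the same route as the paper: confine $x$ to $(0,\frac{1}{\beta^{2}-1})$ using $x+1\in I_{\beta}$, iterate $T_{0}$ into the interval $[\frac{1}{\beta^{2}-1},\frac{\beta}{\beta^{2}-1}]$, split into the three subintervals, apply Proposition~\ref{unique transformation prop} to identify the entry point into $S_{\beta}$, and use $\beta^{2}y-1<\beta-1<((\epsilon^{3}_{i})_{i=1}^{\infty})_{\beta}$ to force the entry point to be $((\epsilon^{1}_{i})_{i=1}^{\infty})_{\beta}$ on the lower side interval. Your explicit reflection argument for (\ref{Case 4}) and your verification of the constraints $k\geq 2$ in (\ref{Case 2}) and (\ref{Case 4}) merely spell out what the paper compresses into ``by a similar argument'' and ``it is a simple exercise''.
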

To prove that $\beta\notin \B_{4}$ we will show that for each of the four cases described in Proposition~\ref{case prop} there exists $a$ such that
\begin{equation}
\label{eq:ax1}
a(x+1)\in S_{\beta}\setminus\{((\epsilon^{1}_{i})_{i=1}^{\infty})_{\beta}, ((\epsilon^{3}_{i})_{i=1}^{\infty})_{\beta}\},
\end{equation}
which contradicts $\#\Omega_{\beta}(x+1)=2$ by Proposition~\ref{unique transformation prop} and Remark~\ref{Proof remark}.

For the majority of our cases an argument analogous to that used in Section~\ref{No 3 expansions} will suffice, however in the case where $k=1,3$ in (\ref{Case 3}) and $k=2,4$ in (\ref{Case 4}) a different argument is required. We refer to these cases as the \textit{exceptional cases}. For the exceptional cases we will also show (\ref{eq:ax1}), however the approach used in slightly more technical and as such we will treat these cases separately.

\begin{prop}
\label{map prop}
For each of the cases described by Proposition~\ref{case prop} there exists $a$ such that (\ref{eq:ax1}) holds.
\end{prop}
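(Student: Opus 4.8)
The plan is to work through the four cases of Proposition~\ref{case prop} one at a time, in each case exhibiting an explicit finite transformation word $a$ (depending on $k$, and in Cases~(\ref{Case 3}) and~(\ref{Case 4}) also on $j$) for which $a(x+1)$ falls into $S_\beta$ while avoiding the two admissible targets $((\epsilon^1_i)_{i=1}^\infty)_\beta=0.645\ldots$ and $((\epsilon^3_i)_{i=1}^\infty)_\beta=0.761\ldots$. Once (\ref{eq:ax1}) is verified in every case, Proposition~\ref{unique transformation prop} together with Remark~\ref{Proof remark} forces $\#\Omega_\beta(x+1)>2$, contradicting (\ref{eq:sum2}); this is precisely what is needed to conclude $\beta\notin\B_4$.

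First I would record $x+1$ explicitly in each case and isolate its two limiting regimes. Since in all four cases $x$ opens with a block $0^k$, we have $x\to0$ and hence $x+1\to1$ as $k\to\infty$; because $T_0(x+1)\notin I_\beta$ whenever $x+1>1$, the forced first step is $T_1$, and $T_1(x+1)\to\beta-1=0.710644\ldots$, which lies in $S_\beta$ strictly between the two targets. Thus a single application of $T_1$, combined with the monotonicity of $T_0,T_1$, disposes of all sufficiently large $k$ uniformly, leaving only finitely many small $k$ to be dispatched by tabulating the first-entry orbit of $x+1$ into $S_\beta$, exactly as in Table~\ref{table2}. In Cases~(\ref{Case 3}) and~(\ref{Case 4}) there is a second regime: for fixed $k$ and $j\to\infty$ one has $x+1\to(0^k(01)^\infty)_\beta+1$ resp.\ $(0^k(10)^\infty)_\beta+1$, the very points analysed in Section~\ref{No 3 expansions}. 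Whenever this limit already enters $S_\beta$ at a non-target point (which Table~\ref{table2} shows to hold outside the exceptional cases), an open-window argument applies: for large $j$ the orbit of $x+1$ shadows that of the limit and enters $S_\beta$ near the same non-target value, and small $j$ are checked directly.

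The main obstacle is the exceptional cases $k=1,3$ in~(\ref{Case 3}) and $k=2,4$ in~(\ref{Case 4}). Here the $j\to\infty$ limit point has a \emph{unique} $\beta$-expansion by Lemma~\ref{Unique expansions lemma}, so its orbit never meets $S_\beta$ and no open window is available: as $j$ grows the orbit of $x+1$ is forced to shadow a unique-expansion orbit for a number of steps growing with $j$ before it can deviate and fall into $S_\beta$. My plan for these cases is to follow the known transformation word of the limit orbit until the shadowing breaks --- that is, until the orbit reaches the place where the finite block $(01)^{j}$ resp.\ $(10)^{j}$ gives way to the $(10)^\infty$ tail of $(\epsilon^1_i)_{i=1}^\infty$ resp.\ $(\epsilon^3_i)_{i=1}^\infty$ --- and then to evaluate the resulting point exactly. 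The key technical claim will be that this image equals a fixed non-target value of $S_\beta$ up to a correction of order $\beta^{-2j}$, so that after checking finitely many small $j$ by hand the remaining $j$ follow from a single inequality; the sign and size of the correction can be pinned down once and for all using the defining relation $\beta^4=2\beta^2+\beta+1$.

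I therefore expect the genuine difficulty to be confined to these four sub-cases, where $a$ has length growing with $j$ and membership in $S_\beta\setminus\{((\epsilon^1_i)_{i=1}^\infty)_\beta,((\epsilon^3_i)_{i=1}^\infty)_\beta\}$ must be certified uniformly in $j$; everything else reduces, exactly as in the proof of Theorem~\ref{Three expansions thm}, to a finite table of iterates together with the two limiting reductions above.
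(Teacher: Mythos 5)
Your proposal follows essentially the same route as the paper: finite first-entry tables plus the two limiting regimes ($x+1\to1$ as $k\to\infty$, and $j\to\infty$ toward the points already analysed in Section~\ref{No 3 expansions}) for the non-exceptional cases, and, for the exceptional cases, following the forced orbit of the unique-expansion limit for a $j$-dependent number of steps and evaluating the landing point exactly via $\beta^4=2\beta^2+\beta+1$. The only (minor) difference is that the paper's Proposition~\ref{Constructed map prop} shows your anticipated $O(\beta^{-2j})$ correction vanishes identically --- the landing point is \emph{exactly} the constant $\approx 0.59282$ (resp.\ $\approx 0.81434$) for all admissible $j$ --- so no uniform inequality is needed, only the three leftover small-$j$ cases of Table~\ref{table5}.
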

\begin{proof}[Proof of Proposition~\ref{map prop} for the non-exceptional cases]
In the cases where $x=(0^{k}(\epsilon^{1}_{i})_{i=1}^{\infty})_{\beta}$ for some  $k\geq 1$ or $x=(0^{k}(\epsilon^{3}_{i})_{i=1}^{\infty})_{\beta}$ for some $k\geq 1$ it is clear that $x\to 0$ as $k\to \infty$, therefore to understand the orbit of  $(0^{k}(\epsilon^{1}_{i})_{i=1}^{\infty})_{\beta}+1$ or $(0^{k}(\epsilon^{3}_{i})_{i=1}^{\infty})_{\beta}+1$ for large values of $k$ it suffices to consider the orbit of $1$. Similarly, in the cases described by (\ref{Case 3}) and (\ref{Case 4}) if we fix $k\geq 1$ then as $j\to \infty$ both $(0^{k}(01)^{j}(\epsilon^{1}_{i})_{i=1}^{\infty})_{\beta}$ and
$(0^{k}(10)^{j}(\epsilon^{3}_{i})_{i=1}^{\infty})_{\beta}$ converge to $(0^{l}(10)^{\infty})$ for some $l\geq 1$, therefore to understand the orbits of $(0^{k}(01)^{j}(\epsilon^{1}_{i})_{i=1}^{\infty})_{\beta}+1$ and
$(0^{k}(10)^{j}(\epsilon^{3}_{i})_{i=1}^{\infty})_{\beta}+1$ for large values of $j$ it suffices to consider the orbit of $(0^{l}(10)^{\infty})_{\beta}+1$, for some $l\geq 1$. By considering these limits it will be clear when a sequence of transformations $a$ exists that satisfies (\ref{eq:ax1}) for large values of $k$ and $j$.

We begin by considering the case  $x=(0^{k}(\epsilon^{1}_{i})_{i=1}^{\infty})_{\beta}$. Table~\ref{table3} plots successive (unique) iterates of $(0^{k}(\epsilon^{1}_{i})_{i=1}^{\infty})_{\beta}+1$ until $(0^{k}(\epsilon^{1}_{i})_{i=1}^{\infty})_{\beta}+1$ is mapped into $S_{\beta}$ for $1\leq k\leq 6$. It is clear from inspection of Table \ref{table3} that for $1\leq k \leq 6$ there exists $a$ such that $a(x+1)\in S_{\beta}$, $a(x+1)\neq ((\epsilon^{1}_{i})_{i=1}^{\infty})_{\beta}$ and $a(x+1)\neq ((\epsilon^{3}_{i})_{i=1}^{\infty})_{\beta}$. The case where $k\geq 7$ follows from the fact that $(0^{k}(\epsilon^{1}_{i})_{i=1}^{\infty})_{\beta}+1 \in (1,(000000(\epsilon^{1}_{i})_{i=1}^{\infty})_{\beta}+1)$ for all $k\geq 7$ and  $T_{1}(y)\in (0.710644\ldots,0.754688\ldots)$ for all  $y\in(1,(000000(\epsilon^{1}_{i})_{i=1}^{\infty})_{\beta}+1)$. The case described by (\ref{Case 2}) follows by an analogous argument therefore the details are omitted, we just include the relevant orbits in Table \ref{table4}.

\begin{table}
\centering
\begin{tabular}{|c|c|}
\hline
$(0^{k}(\epsilon^{1}_{i})_{i=1}^{\infty})_{\beta}+1$ &  Iterates of $(0^{k}(\epsilon^{1}_{i})_{i=1}^{\infty})_{\beta}+1$ (to 6 decimal places)\\
\hline
$(0(\epsilon^{1}_{i})_{i=1}^{\infty})_{\beta}+1$ & $1.377166, 1.355842, 1.319363, 1.256961,$ \\
 & $1.150213, 0.967605,  0.655228$\\
$(00(\epsilon^{1}_{i})_{i=1}^{\infty})_{\beta}+1$ & $1.220482, 1.087810, 0.860857, 0.472620, 0.808484$\\
$(000(\epsilon^{1}_{i})_{i=1}^{\infty})_{\beta}+1$ & $1.128888, 0.931126, 0.592825$ \\
$(0000(\epsilon^{1}_{i})_{i=1}^{\infty})_{\beta}+1$ & $1.075344, 0.839532, 0.436141, 0.746082$\\
$(00000(\epsilon^{1}_{i})_{i=1}^{\infty})_{\beta}+1$ & $1.044044, 0.785989$\\
$(000000(\epsilon^{1}_{i})_{i=1}^{\infty})_{\beta}+1$ & $1.025747,0.754688$\\
$1$ & $1, 0.710644$\\
\hline
\end{tabular}
\caption{Successive iterates of $(0^{k}(\epsilon^{1}_{i})_{i=1}^{\infty})_{\beta}+1$}
\label{table3}
\end{table}
For the non-exceptional cases described by (\ref{Case 3}) and (\ref{Case 4}) an analogous argument works for the first few values of $k$ by considering the limit of $x+1$ as $j\to \infty$, therefore we just include the relevant orbits in Table \ref{table4}. It is clear by inspection of Table \ref{table4} that  $(0^{k}(01)^{j}(\epsilon^{1}_{i})_{i=1}^{\infty})_{\beta}+ 1\in(1,(00000001(\epsilon^{1}_{i})_{i=1}^{\infty})_{\beta}+1)$ for all $k\geq 7$ and $j\geq 1$, however $T_{1}(y)\in (0.710644\ldots,0.749023\ldots)$ for all $y\in (1,(00000001(\epsilon^{1}_{i})_{i=1}^{\infty})_{\beta}+1)$, therefore by inspection of Table \ref{table4} we can conclude the case described by (\ref{Case 3}) in the non-exceptional cases. Similarly, it is clear from inspection of Table \ref{table4} that $(0^{k}(10)^{j}(\epsilon^{3}_{i})_{i=1}^{\infty})_{\beta}+ 1\in(1,(0000000(10)^{\infty})_{\beta}+1)$ for all $k\geq 8$ and $j\geq 1$, however $T_{1}(y)\in (0.710644\ldots,0.7460826\ldots)$ for all $y\in(1,(0000000(10)^{\infty})_{\beta}+1)$, therefore by inspection of Table \ref{table4} we can conclude the case described by (\ref{Case 4}) in the non-exceptional cases.
\begin{table}
\centering
\begin{tabular}{|c|c|}
\hline
$(0^{k}(\epsilon^{3}_{i})_{i=1}^{\infty})_{\beta}+1$ &  Iterates of $(0^{k}(\epsilon^{3}_{i})_{i=1}^{\infty})_{\beta}+1$ (to 6 decimal places)\\
\hline
$(00(\epsilon^{3}_{i})_{i=1}^{\infty})_{\beta}+1$ & $1.260388, 1.156076, 0.977635, 0.672385$\\
$(000(\epsilon^{3}_{i})_{i=1}^{\infty})_{\beta}+1$ & $1.152216, 0.971032, 0.661091$ \\
$(0000(\epsilon^{3}_{i})_{i=1}^{\infty})_{\beta}+1$ & $1.088982, 0.862860, 0.476047, 0.814348$\\
$(00000(\epsilon^{3}_{i})_{i=1}^{\infty})_{\beta}+1$ & $1.052016, 0.799626$\\
$(000000(\epsilon^{3}_{i})_{i=1}^{\infty})_{\beta}+1$ & $1.030407, 0.762660$\\
$(0000000(\epsilon^{3}_{i})_{i=1}^{\infty})_{\beta}+1$ & $1.017775, 0.741051$\\
$1$ & $1, 0.710644$\\
\hline
$(00(01)^{j}(\epsilon^{1}_{i})_{i=1}^{\infty})_{\beta}+1$ &  Iterates of $(00(01)^{j}(\epsilon^{1}_{i})_{i=1}^{\infty})_{\beta}+1$\\
\hline
$(0001(\epsilon^{1}_{i})_{i=1}^{\infty})_{\beta}+1$ & $1.192123, 1.039298, 0.777869$\\
$(000101(\epsilon^{1}_{i})_{i=1}^{\infty})_{\beta}+1$ & $1.182431, 1.022720, 0.749510$ \\
$(00(01)^{\infty})_{\beta}+1$ & $1.177400, 1.014114, 0.734788$\\
\hline
$(0000(01)^{j}(\epsilon^{1}_{i})_{i=1}^{\infty})_{\beta}+1$ &  Iterates of $(0000(01)^{j}(\epsilon^{1}_{i})_{i=1}^{\infty})_{\beta}+1$\\
\hline
$(000001(\epsilon^{1}_{i})_{i=1}^{\infty})_{\beta}+1$ & $1.065653,0.822954, 0.407782, 0.697570$\\
$(00000101(\epsilon^{1}_{i})_{i=1}^{\infty})_{\beta}+1$ & $1.062342, 0.817289$ \\
$(0000(01)^{\infty})_{\beta}+1$ & $1.060622, 0.814348$\\
\hline
$(00000(01)^{j}(\epsilon^{1}_{i})_{i=1}^{\infty})_{\beta}+1$ &  Iterates of $(00000(01)^{j}(\epsilon^{1}_{i})_{i=1}^{\infty})_{\beta}+1$\\
\hline
$(0000001(\epsilon^{1}_{i})_{i=1}^{\infty})_{\beta}+1$ & $1.038379, 0.776297$\\
$(00000(01)^{\infty})_{\beta}+1$ & $1.035438, 0.771266$\\
\hline
$(000000(01)^{j}(\epsilon^{1}_{i})_{i=1}^{\infty})_{\beta}+1$ &  Iterates of $(000000(01)^{j}(\epsilon^{1}_{i})_{i=1}^{\infty})_{\beta}+1$\\
\hline
$(00000001(\epsilon^{1}_{i})_{i=1}^{\infty})_{\beta}+1$ & $1.022435, 0.749023$\\
$(000000(01)^{\infty})_{\beta}+1$ & $1.020716, 0.746082$\\
\hline
$(000(10)^{j}(\epsilon^{3}_{i})_{i=1}^{\infty})_{\beta}+1$ &  Iterates of $(000(10)^{j}(\epsilon^{3}_{i})_{i=1}^{\infty})_{\beta}+1$\\
\hline
$(00010(\epsilon^{3}_{i})_{i=1}^{\infty})_{\beta}+1$ & $1.168794,0.999391,0.709603$\\
$(000(10)^{\infty})_{\beta}+1$ & $1.177400, 1.014114, 0.734788$\\
\hline
$(00000(10)^{j}(\epsilon^{3}_{i})_{i=1}^{\infty})_{\beta}+1$ &  Iterates of $(00000(10)^{j}(\epsilon^{3}_{i})_{i=1}^{\infty})_{\beta}+1$\\
\hline
$(0000010(\epsilon^{3}_{i})_{i=1}^{\infty})_{\beta}+1$ & $1.057681,0.809317$\\
$(00000(10)^{\infty})_{\beta}+1$ & $1.060622, 0.814348$\\
\hline
$(000000(10)^{j}(\epsilon^{3}_{i})_{i=1}^{\infty})_{\beta}+1$ &  Iterates of $(000000(10)^{j}(\epsilon^{3}_{i})_{i=1}^{\infty})_{\beta}+1$\\
\hline
$(00000010(\epsilon^{3}_{i})_{i=1}^{\infty})_{\beta}+1$ & $1.033719,0.768326$\\
$(000000(10)^{\infty})_{\beta}+1$ & $1.035438, 0.771266$\\
\hline
$(0000000(10)^{j}(\epsilon^{3}_{i})_{i=1}^{\infty})_{\beta}+1$ &  Iterates of $(0000000(10)^{j}(\epsilon^{3}_{i})_{i=1}^{\infty})_{\beta}+1$\\
\hline
$(000000010(\epsilon^{3}_{i})_{i=1}^{\infty})_{\beta}+1$ & $1.019711,0.744363$\\
$(0000000(10)^{\infty})_{\beta}+1$ & $1.020716, 0.746082$\\
\hline
\end{tabular}
\medskip
\caption{Successive iterates of $(0^{k}(\epsilon^{3}_{i})_{i=1}^{\infty})_{\beta}+1$}
\label{table4}
\end{table}
\end{proof}

\begin{proof}[Proof of Proposition~\ref{map prop} for the exceptional cases] The reason we cannot use the same method as used for the non-exceptional cases is because as $j\to\infty$ the limits of $(0(01)^{j}(\epsilon^{1}_{i})_{i=1}^{\infty})_{\beta}+1, (000(01)^{j}(\epsilon^{1}_{i})_{i=1}^{\infty})_{\beta}+1, (00(10)^{j}(\epsilon^{3}_{i})_{i=1}^{\infty})_{\beta}+1$ and $(0000(10)^{j}(\epsilon^{3}_{i})_{i=1}^{\infty})_{\beta}+1$ all have unique ${\beta}$-expansions, which follows from Proposition~\ref{two expansions corollary}. As a consequence of the uniqueness of the ${\beta}$-expansion of the relevant limit, the number of transformations required to map $x+1$ into $S_{\beta}$ becomes arbitrarily large as $j\to \infty$. However, the following proposition shows that we can still construct an $a$ satisfying (\ref{eq:ax1}) for all but three of the exceptional cases.

\begin{prop}
\label{Constructed map prop}
The following identities hold:
\begin{equation}
\label{identity 1}
\begin{aligned}
((T_{1}\circ T_{0})^{j-2}\circ (T_{1})^{4})((0(01)^{j}(\epsilon^{1}_{i})_{i=1}^{\infty})_{\beta}+1)&= \frac{\beta-1}{\beta^{3}(\beta^{2}-1)}+\frac{1}{\beta^{2}-1}\\
&\approx 0.59282 \textrm{ for } j\geq 3,
\end{aligned}
\end{equation}

\begin{equation}
\label{identity 2}
\begin{aligned}
((T_{1}\circ T_{0})^{j}\circ (T_{1})^{2})((000(01)^{j}(\epsilon^{1}_{i})_{i=1}^{\infty})_{\beta}+1)&= \frac{\beta-1}{\beta^{3}(\beta^{2}-1)}+\frac{1}{\beta^{2}-1}\\
&\approx 0.59282 \textrm{ for } j\geq 1,
\end{aligned}
\end{equation}

\begin{equation}
\label{identity 3}
\begin{aligned}
((T_{0}\circ T_{1})^{j-1}\circ (T_{1})^{3})((00(10)^{j}(\epsilon^{3}_{i})_{i=1}^{\infty})_{\beta}+1)&= \frac{\beta}{\beta^{2}-1}+\frac{1-\beta}{\beta^{3}(\beta^{2}-1)}\\
&\approx 0.81434 \textrm{ for } j\geq 2
\end{aligned}
\end{equation}
and
\begin{equation}
\label{identity 4}
\begin{aligned}
((T_{0}\circ T_{1})^{j+1}\circ (T_{1}))((0000(10)^{j}(\epsilon^{3}_{i})_{i=1}^{\infty})_{\beta}+1)&= \frac{\beta}{\beta^{2}-1}+\frac{1-\beta}{\beta^{3}(\beta^{2}-1)}\\
&\approx 0.81434 \textrm{ for } j\geq 1.
\end{aligned}
\end{equation}
\end{prop}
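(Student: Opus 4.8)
The four displayed statements are purely algebraic identities, so the plan is to evaluate each left-hand side by direct affine computation, using that any composition of $T_0$ and $T_1$ is an affine function of $\beta$. First I would record the closed forms
\[
(T_1)^m(y)=\beta^m y-\tfrac{\beta^m-1}{\beta-1},\quad (T_1\circ T_0)^m(y)=\beta^{2m}y-\tfrac{\beta^{2m}-1}{\beta^2-1},\quad (T_0\circ T_1)^m(y)=\beta^{2m}y-\tfrac{\beta(\beta^{2m}-1)}{\beta^2-1},
\]
each following by an immediate induction from $T_0(y)=\beta y$, $(T_1\circ T_0)(y)=\beta^2y-1$ and $(T_0\circ T_1)(y)=\beta^2y-\beta$. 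Composing the two blocks appearing in each identity then expresses the relevant iterate of $x+1$ as $\beta^N(x+1)$ minus an explicit rational function of $\beta$, where $N$ is the total number of applied transformations ($N=2j,\,2j+2,\,2j+1,\,2j+3$ in (\ref{identity 1})--(\ref{identity 4}) respectively, reading $(T_1)^m$ as innermost).

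The remaining ingredient is a closed form for $x$ itself. Here I would use the merging $(01)^{j}(\epsilon^1_i)_{i=1}^\infty=(01)^{j+1}(10)^\infty$ (and analogous bookkeeping in the $(\epsilon^3_i)_{i=1}^\infty$ cases), so that $x$ has an eventually periodic $\beta$-expansion whose value is a finite sum of geometric series. Summing these gives, in the first two cases, $\beta^{N}x=\frac{\beta^{2j-1}-\beta^{-3}+\beta^{-2}}{\beta^2-1}$, the salient point being that the only growing power of $\beta$ that survives is $\beta^{2j-1}$. Substituting $\beta^N(x+1)=\beta^N+\beta^Nx$ into the affine expression from the previous step then presents the left-hand side as a single explicit function of $\beta$ and $\beta^{2j}$.

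The crux is that the result must be independent of $j$, so every term carrying a power of $\beta^{2j}$ has to cancel. Collecting the coefficient of $\beta^{2j}$ (equivalently of $\beta^{2j-4}$ in (\ref{identity 1})) reduces, after clearing denominators, exactly to the relation $\beta^4=2\beta^2+\beta+1$ defining $\beta=q_2$; this is the one genuine computation and I expect it to be the main obstacle, precisely because it is the step where the special value of $\beta$ is forced. Once these terms vanish, the surviving $j$-free constant is $\frac{\beta^{-2}-\beta^{-3}}{\beta^2-1}+\frac1{\beta^2-1}=\frac{\beta-1}{\beta^3(\beta^2-1)}+\frac1{\beta^2-1}$, which is exactly the right-hand side of (\ref{identity 1}) and (\ref{identity 2}); note $\frac1{\beta^2-1}=((01)^\infty)_\beta$, so the constant is a small perturbation of the natural periodic value.

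Finally, (\ref{identity 3}) and (\ref{identity 4}) are handled by the identical procedure with $(T_1\circ T_0)$ replaced by $(T_0\circ T_1)$ and $(\epsilon^1_i)_{i=1}^\infty$ by $(\epsilon^3_i)_{i=1}^\infty$; the same cancellation again collapses to $\beta^4=2\beta^2+\beta+1$ and leaves the constant $\frac{\beta}{\beta^2-1}+\frac{1-\beta}{\beta^3(\beta^2-1)}$. As a consistency check this equals $\frac1{\beta-1}$ minus the constant of (\ref{identity 1}), matching the reflection symmetry $\#\Sigma_\beta(z)=\#\Sigma_\beta(\frac1{\beta-1}-z)$; and numerically the two constants $0.59282\ldots$ and $0.81434\ldots$ lie in $S_\beta=[0.584575\ldots,0.822599\ldots]$ while differing from $((\epsilon^1_i)_{i=1}^\infty)_\beta=0.645198\ldots$ and $((\epsilon^3_i)_{i=1}^\infty)_\beta=0.761976\ldots$. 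Together with a routine verification that the intermediate iterates stay in $I_\beta$ (so that $a$ is a genuine sequence of transformations), this is exactly what is needed to deduce (\ref{eq:ax1}) from the four identities via Proposition~\ref{unique transformation prop} and Remark~\ref{Proof remark}.
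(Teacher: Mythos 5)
Your proposal is correct and follows essentially the same route as the paper: write $x+1$ in closed form, expand the composed maps as affine functions of $\beta$, and observe that the $j$-dependent terms cancel precisely because $\beta^{4}-2\beta^{2}-\beta-1=0$, leaving the stated $j$-free constant. The paper carries this computation out explicitly only for (\ref{identity 1}) (declaring the other three analogous), exactly as you propose; your added consistency checks (reflection symmetry and the numerical location in $S_{\beta}$) are a harmless bonus.
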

\begin{proof}
Proving that each of the identities~(\ref{identity 1}), (\ref{identity 2}), (\ref{identity 3}) and (\ref{identity 4}) hold follow by similar arguments, we will therefore just show that (\ref{identity 1}) holds. Note that
$$
(0(01)^{j}(\epsilon^{1}_{i})_{i=1}^{\infty})_{\beta}+1= \frac{\beta^{2j+2}+\beta-1}{\beta^{2j+3}(\beta^{2}-1)}+1,
$$
for all $j\geq 1$. We observe the following:
\begin{align*}
&((T_{1}\circ T_{0})^{j-2}\circ (T_{1})^{4})\Big(\frac{\beta^{2j+2}+ \beta-1}{\beta^{2j+3}(\beta^{2}-1)}+1\Big)\\
=&(T_{1}\circ T_{0})^{j-2}\Big(\frac{\beta^{2j+2}+ \beta-1}{\beta^{2j-1}(\beta^{2}-1)}+ \beta^{4}-\beta^{3}-\beta^{2}-\beta-1\Big)\\
=&\beta^{2j-4}\Big(\frac{\beta^{2j+2}+\beta-1}{\beta^{2j-1}(\beta^{2}-1)}+ \beta^{4}-\beta^{3}-\beta^{2}-\beta-1\Big)- \sum_{i=0}^{j-3}\beta^{2i}\\
=&\frac{\beta^{2j+2}+\beta-1}{\beta^{3}(\beta^{2}-1)}+\beta^{2j}- \beta^{2j-1}-\beta^{2j-2}-\beta^{2j-3}-\beta^{2j-4}-
\frac{\beta^{2j-4}-1}{\beta^{2}-1}\\
=& \frac{\beta^{2j+2}}{\beta^{3}(\beta^{2}-1)}+\beta^{2j}-\beta^{2j-1}- \beta^{2j-2}-\beta^{2j-3}-\beta^{2j-4}-\frac{\beta^{2j-4}}{\beta^{2}-1}+ \frac{\beta-1}{\beta^{3}(\beta^{2}-1)}+ \frac{1}{\beta^{2}-1}.
\end{align*}Therefore, to conclude our proof, it suffices to show that
\begin{equation}
\label{last equation}
\frac{\beta^{2j+2}}{\beta^{3}(\beta^{2}-1)}+ \beta^{2j}-\beta^{2j-1}-\beta^{2j-2}-\beta^{2j-3}-\beta^{2j-4}- \frac{\beta^{2j-4}}{\beta^{2}-1}=0.
\end{equation}
Manipulating the left hand side of (\ref{last equation}) it is clear that satisfying (\ref{last equation}) is equivalent to
\[
 \frac{\beta^{2j-1}-\beta^{2j-4}+(\beta^{2j}-\beta^{2j-1}- \beta^{2j-2}-\beta^{2j-3}-\beta^{2j-4})(\beta^{2}-1)}{\beta^{2}-1}=0
 \]
 or
\[ \frac{\beta^{2j-3}(\beta-1)(\beta^{4}-2\beta^{2}-\beta-1)}{\beta^{2}-1}=0.
\]
This is true in view of $\beta^{4}-2\beta^{2}-\beta-1=0$.
\end{proof}

By Proposition~\ref{Constructed map prop} and Table~\ref{table5} which displays the orbits of the exceptional cases that are not covered by Proposition~\ref{Constructed map prop} we can conclude Proposition~\ref{map prop} for all the exceptional cases, therefore $\beta\notin \B_{4}$, and Theorem~\ref{Main thm} holds.

\begin{table}
\centering
\begin{tabular}{|c|c|}
\hline
Exceptional cases &  Iterates (to 6 decimal places) \\
\hline
$(001(\epsilon^{1}_{i})_{i=1}^{\infty})_{\beta}+1$ & $1.328654, 1.272854, 1.177400, 1.014114, 0.734788$\\
$(00101(\epsilon^{1}_{i})_{i=1}^{\infty})_{\beta}+1$ & $1.312076, 1.244495, 1.128888, 0.931126, 0.592825$\\
$(0010(\epsilon^{3}_{i})_{i=1}^{\infty})_{\beta}+1$ & $1.288747, 1.204588, 1.060622, 0.814348$\\
\hline
\end{tabular}
\medskip
\caption{Remaining exceptional cases: $k=1, j\in\{1,2\}$ in (\ref{Case 3}) and $k=2, j=1$ in (\ref{Case 4})}
\label{table5}
\end{table}

\end{proof}
\section{Open questions}
To conclude the paper, we pose a few open questions:
\begin{itemize}
\item What is the topology of $\B_{k}$ for $k\geq 2$? In particular, what is the smallest limit point of $\B_k$? Is it below or above the Komornik-Loreti constant introduced in \cite{KomLor}?
\item What is the smallest $q$ such that $x=1$ has $k$ $q$-expansions? (For $k=1$ this is precisely the Komornik-Loreti constant.)
\item What is the structure of $\B_{\aleph_0} \cap \bigl(\frac{1+\sqrt5}2, q_f\bigr)$? In view of the results of the present paper, knowing this would lead to a complete understanding of $\card\,\Sigma_q(x)$ for all $q\le q_f$ and all $x\in I_{q}$.
\item Let, as above,
$$
\B_{\infty}= \bigcap_{k=1}^{\infty} \B_{k} \cap \B_{\aleph_{0}}\cap \B_{2^{\aleph_{0}}}.
$$
By Theorem~\ref{all cardinalities}, $q_{f}$ is the smallest element of $\B_{\infty}.$ What is the second smallest element of $\B_{\infty}$? What is the topology of $\B_{\infty}$?
\item In \cite{AllClaSid} the authors study the order in which periodic orbits appear in the set of points with unique $q$-expansion, they show that as $q\uparrow 2$, the order in which periodic orbits appear in the set of uniqueness is intimately related to the classical Sharkovski\u{\i} ordering. Does a similar result hold in our case? That is, if $k> k'$ with respect to the usual Sharkovski\u{\i} ordering, does this imply $\B_{k}\subset \B_{k'}$?
\end{itemize}

\medskip\noindent {\bf Acknowledgment.} The authors are grateful to Vilmos Komornik for useful suggestions.

\end{document}